\documentclass[12pt]{amsart}
\usepackage{amssymb,latexsym,amsmath,enumerate,nicefrac}

\usepackage{amsaddr}
\setlength{\footskip}{.5in}
\usepackage[total={6.5in,9in}, top=1in, left=1in, headsep = 0in]{geometry}

\usepackage{fancyhdr}
\pagestyle{fancyplain}% <- use fancyplain instead fancy
\fancyhf{}
\fancyfoot[C]{\thepage}

\usepackage{color}
\definecolor{myurlcolor}{rgb}{0.6,0,0}
\definecolor{mycitecolor}{rgb}{0,0,0.8}
\definecolor{myrefcolor}{rgb}{0,0,0.8}
\usepackage[bookmarks=false,pagebackref=true]{hyperref}
\hypersetup{colorlinks,
linkcolor=myrefcolor,
citecolor=mycitecolor,
urlcolor=myurlcolor}

\renewcommand*{\backref}[1]{}%{(Referred to on page~#1.)}

\usepackage[all]{xy}
\usepackage{tikz}
\usetikzlibrary{calc}
\usepackage{graphics}
\usepackage{dsfont}
\usepackage{amsfonts}
\usepackage{mathtools}
\usepackage{comment}

\pgfkeys{/tikz/.cd, arrowhead/.default=-1pt, arrowhead/.code={}}

\newcommand{\dwsp}{\hspace{1pt}}

\allowdisplaybreaks

\usepackage{graphics}
\usepackage{dsfont}
\usepackage{amsbsy}
\usepackage{amsmath}
\usepackage{amsfonts,amssymb}
\usepackage{mathrsfs}
\usepackage{mathtools}

\usepackage{tikz}
\usetikzlibrary{calc}

\newcommand{\x}{[x]}

\newcommand{\Prob}{{\rm Prob}}
\newcommand{\Pb}{{\rm P}}
\newcommand{\Pbm}{{\rm P}^m}
\newcommand{\PbG}{{\rm P}^\ast}

\newcommand{\EXm}{{\mathds E}_m}
\newcommand{\EXG}{{\mathds E}_\ast}

\newcommand{\ve}[1]{{\mathbf{#1}}}

\newcommand{\BB}{{\mathds B}}
\renewcommand{\SS}{{\mathds S}}

\newcommand{\e}{{\rm e}}
\newcommand{\X}{\tilde{X}}

\newcommand{\Z}{\tilde{Z}}
\newcommand{\s}{\tilde{S}}
\newcommand{\F}{\mathcal F}
\newcommand{\FG}{\mathcal F_G}
\newcommand{\C}{\mathscr C}
\renewcommand{\O}{\mathcal O}
\renewcommand{\P}{\mathcal P}
\DeclareMathOperator{\Rep}{Rep}

\newcommand{\Ell}{\tau}
\renewcommand{\(}{\left(} 
\renewcommand{\)}{\right)}
\DeclarePairedDelimiter\floor{\lfloor}{\rfloor}

\newtheorem{theorem}{Theorem}[section]
\newtheorem{lemma}{Lemma}[section]
\newtheorem{proposition}{Proposition}[section]
\theoremstyle{definition}
\newtheorem{definition}{Definition}[section]

\theoremstyle{remark}

\makeatletter
\def\namedlabel#1#2{\begingroup
    #2%
    \def\@currentlabel{#2}%
    \phantomsection\label{#1}\endgroup
}
\makeatother

\setcounter{tocdepth}{1} 

\begin{document}

\title{Brownian Motion in a Vector Space over a Local Field is a Scaling Limit}

\author{
Tyler Pierce$^\ast$, Rahul Rajkumar$^\ast$, Andrea~Stine$^\ast$, David Weisbart$^\ast$ \and Adam M. Yassine$^\dagger$}
\address{\begin{tabular}[h]{cc}
 $^\ast$Department of Mathematics & $^\dagger$Department of Mathematics and Statistics\\
   University of California, Riverside &   Pomona College\\&\end{tabular}}
\email{tpier002@ucr.edu}\email{rrajk004@ucr.edu}\email{astin005@ucr.edu}\email{weisbart@math.ucr.edu}\email{adam.yassine@pomona.edu}

\begin{abstract}
For any natural number $d$, the Vladimirov-Taibleson operator is a natural analogue of the Laplace operator for complex-valued functions on a $d$-dimensional vector space $V$ over a local field $K$.  Just as the Laplace operator on $L^2(\mathds R^d)$ is the infinitesimal generator of Brownian motion with state space $\mathds R^d$, the Vladimirov-Taibleson operator on $L^2(V)$ is the infinitesimal generator of real-time Brownian motion with state space $V$.  This study deepens the formal analogy between the two types of diffusion processes by demonstrating that both are scaling limits of discrete-time random walks on a discrete group.  It generalizes the earlier works, which restricted $V$ to be the $p$-adic numbers. 
\end{abstract}

\maketitle

\begin{flushright}
In memory of Professor V.S.Varadarajan.
\end{flushright}

\tableofcontents

%

%%%%%%%%%%%%%%%%%%%%%%%%%%%%%%%%%%%%%%%%%%%%%%%%%%%%%%%%%%%%%%%%%%%%%%
%%%%%%%%%%%%%%%%%%%%%%%%%%%%%%%%%%%%%%%%%%%%%%%%%%%%%%%%%%%%%%%%%%%%%%
%%%%%%%%%%%%%%%%%%%%%%%%%%%%%%%%%%%%%%%%%%%%%%%%%%%%%%%%%%%%%%%%%%%%%%
%%%%%%%%%%%%%%%%%%%%%%%%%%%%%%%%%%%%%%%%%%%%%%%%%%%%%%%%%%%%%%%%%%%%%%
%%%%%%%%%%%%%%%%%%%%%%%%%%%%%%%%%%%%%%%%%%%%%%%%%%%%%%%%%%%%%%%%%%%%%%
%%%%%%%%%%%%%%%%%%%%%%%%%%%%%%%%%%%%%%%%%%%%%%%%%%%%%%%%%%%%%%%%%%%%%%
%%%%%%%%%%%%%%%%%%%%%%%%%%%%%%%%%%%%%%%%%%%%%%%%%%%%%%%%%%%%%%%%%%%%%%
%%%%%%%%%%%%%%%%%%%%%%%%%%%%%%%%%%%%%%%%%%%%%%%%%%%%%%%%%%%%%%%%%%%%%%
%%%%%%%%%%%%%%%%%%%%%%%%%%%%%%%%%%%%%%%%%%%%%%%%%%%%%%%%%%%%%%%%%%%%%%
%%%%%%%%%%%%%%%%%%%%%%%%%%%%%%%%%%%%%%%%%%%%%%%%%%%%%%%%%%%%%%%%%%%%%%
%%%%%%%%%%%%%%%%%%%%%%%%%%%%%%%%%%%%%%%%%%%%%%%%%%%%%%%%%%%%%%%%%%%%%%
%%%%%%%%%%%%%%%%%%%%%%%%%%%%%%%%%%%%%%%%%%%%%%%%%%%%%%%%%%%%%%%%%%%%%%
\thispagestyle{empty}

\section{Introduction}\label{Sec:Intro}

\subsection{Personal reflection}

D.Weisbart: It feels appropriate to begin this memorial article with a brief reflection on my 
personal relationship with my mentor.  I first met Professor Varadarajan at a faculty-student lunch in 1999, when I was an undergraduate at UCLA; he became my doctoral supervisor a few years later. Professor Varadarajan had a very close relationship with his students.  I remember with great fondness our regular lunches and our many dinners with his wife, Veda. He supported my career and personal growth unwaveringly and did much more for me than one could hope to expect from a mentor. His office and home were always open to his students --- we were family.  His combination of breadth and depth of knowledge was rare, but equally rare was his understanding of the history of our subject and the interplay of its themes through time. His interests were not limited to mathematics; we spoke often about Mozart (a favorite of his), Shakespeare, and even the Old Norse sagas. He was generous, kind, and his qualities as a person give me an ideal to emulate.  

In late December, 2005, I visited Professor Varadarajan to ask for his advice on a problem on which we were collaborating.  He had figured out a key idea only days before.  ``Today," he told me in the serene setting of his backyard, ``is a good day for you because you are a mathematician and you have understood a beautiful idea.  Hopefully, tomorrow will be a better day and you will discover the idea yourself.''  I have long cherished this sentiment.

Professor Varadarajan taught me to appreciate the inner world of mathematics as a sanctuary of the mind, and he encouraged me to pursue mathematics as a part of my personal growth. This perspective has helped me through some difficult times and enriched my experience of our subject.  Since he initially suggested that I study non-Archimedean analogues of physical systems, I am especially grateful to have been able to present to Professor Varadarajan an early result \cite{BW} on the approximation of Brownian motion with a $p$-adic state space by a sequence of discrete time random walks. The present work extends that result and its sequel \cite{WJPA}.  

It is for me a great honor to dedicate this work to the memory of my dear friend and mentor, whose guidance made it possible for me to have the opportunity to co-author this work with my own students. 

\subsection{Mathematical context}

The idea that the ultramicroscopic structure of spacetime might not be locally Euclidean goes back at least to Riemann, who wrote \cite{Riemann:1854} in his 1854 inaugural lecture: %
\begin{quote}
Now it seems that the empirical notions on which the metrical determinations of space are founded, the notion of a solid body and of a ray of light, cease to be valid for the infinitely small. We are therefore quite at liberty to suppose that the metric relations of space in the infinitely small do not conform to the hypotheses of geometry; and we ought in fact to suppose it, if we can thereby obtain a simpler explanation of phenomena.\end{quote} %
Beltrametti and his collaborators \cite{Beltra1, Beltra2, Beltra3} proposed in the 1970's that the geometry of spacetime might involve a non-Archimedean or finite field.  Already in 1975, Bott \cite{Bott75} wrote: %
\begin{quote}
In fact, I would not be too surprised if discrete mod $p$ mathematics and the $p$-adic numbers would eventually be of use in the building of models for very small phenomena.%
\end{quote}

Taibleson introduced the notion of a pseudo-differential operator on complex-valued functions defined on a local field \cite{Taib}.  Saloff-Coste subsequently investigated properties of these operators \cite{SC1} and extended the study to the broader context of local groups \cite{SC2}.  However, the study of physical systems in a non-Archimedean setting really only gained traction following Volovich's seminal article \cite{vol1}.  Varadarajan discussed this contribution of Volovich \cite[Chapter 6]{V2011} and gave some further background on the \emph{Volovich hypothesis} and its relationship to the study of physics using the \emph{Dirac mode}.  For an extensive review of the history of and recent developments in non-Archimedean physics, see the detailed review of Dragovich, Khrennikov, Kozyrev, Volovich, and Zelenov \cite{DKKVZ:2017} that updates the earlier review \cite{DKKV:2009} by the first four authors.

The seminal works of Kochubei \cite{koch92} and Albeverio and Karwowski \cite{alb} on non-Archimedean diffusion followed Vladimirov's works \cite{Vlad88,Vlad90}, in which he studied the operator now known as the Vladimirov operator.  Varadarajan constructed \cite{var97} a general class of diffusion processes with sample paths in the Skorohod space $D([0, \infty)\colon\mathcal S)$ of c\`adl\`ag paths that take values in a finite dimensional vector space $\mathcal S$ with coefficients in a division algebra that is finite dimensional over a non-Archimedean local field of any characteristic.  The present work follows Varadarajan's approach, but specializes the setting to paths in any finite dimensional vector space $V$ over a local field of any characteristic.  Although it does not treat the full generality of Varadarajan's setting, it still broadly generalizes the prior work on scaling limits in the $p$-adic setting \cite{BW, WJPA} by showing that any real-time Brownian motion with paths in $V$ is a scaling limit.  Rather than work with discrete refinements of a primitive, unscaled process that are later embedded into a continuous spacetime, the present work simplifies some aspects of the earlier work by more directly reducing all calculations to involve only a primitive process.

Section~\ref{Sec:Framework} further develops the framework for studying the approximation of Brownian motion that the previous work introduced \cite{WJPA}.  It extends the framework to handle Brownian motion in any finite-dimensional vector space over $\mathds R$ or over a local field $K$.  %, while also simplifying the approach. 
Additionally, Section~\ref{Sec:Framework} provides some discussion of the real setting to orient a general readership in a potentially more familiar setting.  Section~\ref{Sec:LimitingProcess} provides a review of analysis in finite dimensional vector spaces over a local field and the construction of Brownian motion in this setting.  Section~\ref{Sec:PrimitiveProcess} constructs the primitive stochastic process whose scaling produces the measures that are the primary objects of interest in this paper.  Section~\ref{Sec:Convergence} establishes uniform moment estimates for the scaled processes.  These estimates are important for proving that the sequence of measures that come from scalings of the primitive process are uniformly tight.  Section~\ref{Sec:Convergence} also establishes the convergence of the finite dimensional distributions of the scaled primitive process to the finite dimensional distributions of the limiting process.  The convergence of the finite dimensional distributions together with the uniform tightness of the sequence of the approximating measures implies Theorem~\ref{Sec:Con:Theorem:MAIN}, the main result.

Although the present article generalizes the earlier work and simplifies the approach, the idea of the proof still follows the idea of the proof in the $p$-adic setting \cite{WJPA}, making this work semi-expository in nature.  The hope is that it will be accessible to a wide readership and encourage further exploration.

\section{A Framework for Approximation}\label{Sec:Framework}

\subsection{Path Spaces}

Take $I$ to be either $[0, \infty)$ or $\mathds N_0$, the natural numbers with $0$.  For any Polish space $\mathcal S$, denote by $F(I\colon \mathcal S)$ the set of all paths from $I$ to $\mathcal S$.  Although proofs of existence of stochastic processes with certain specified probabilities typically involve the construction of a probability measure on $F(I\colon \mathcal S)$, answering analytical questions about Brownian motion requires some restriction to path spaces with sample paths that have some continuity properties.  Central to the present study are the probability measures on the \emph{Skorohod space} $D([0,\infty)\colon \mathcal S)$, the set of c\`{a}dl\`{a}g functions from $[0,\infty)$ to $\mathcal S$ equipped with the Skorohod metric \cite{bil1}.  Take $\Omega(I)$ to be either the space $F(I\colon \mathcal S)$ or $D([0,\infty)\colon \mathcal S)$, and $Y$ to be the function that acts on any pair $(t,\omega)$ in $I\times \Omega(I)$ by \[Y(t,\omega) = \omega(t).\]  Curry variables to view $Y$ as a function that takes each $t$ to the function $Y(t, \cdot)$.  Denote by $\mathcal B(\mathcal S)$ the set of Borel subsets of $\mathcal S$.  Compress notation by writing $Y_t$ rather than $Y(t, \cdot)$, so that for any path $\omega$ in $\Omega(I)$, \[Y_t(\omega) = \omega(t),\] and denote by $\mathcal B(Y)$ the set \[\mathcal B(Y) = \left\{Y_t^{-1}(B)\colon t\in I,\; B\in \mathcal B(\mathcal S)\right\}.\]

Some additional language development is useful.  A \emph{history} $h$ for paths in $\Omega(I)$ is for some natural number $\ell(h)$ a finite sequence $\big((t_0, U_0), (t_1, U_1), \dots (t_{\ell(h)}, U_{\ell(h)})\big)$ with these properties:
\begin{enumerate}
\item[(i)] $t_0 = 0$; 
\item[(ii)] $(t_1, t_2, \dots, t_{\ell(h)})$ is a strictly increasing finite sequence in $I\cap (0,\infty)$;
\item[(iii)] $(U_1, U_2, \dots, U_{\ell(h)})$ is a finite sequence in $\mathcal B(\mathcal S)$.  
\end{enumerate}
The natural number $\ell(h)$ is the \emph{length} of $h$.  The finite sequence of time points in (ii) is the \emph{epoch} for $h$, to be denoted by $e_h$.  The finite sequence of Borel sets in (iii) is the \emph{route} for $h$, to be denoted by $U_h$.  Henceforth, for any $i$ in $\{0, \dots, \ell(h)\}$, use the notation $e_h(i)$ and $U_h(i)$ to designate the $i^{\rm th}$ place of the epoch and route of $h$, respectively.  For any history $h$, the set $\C(h)$ is a \emph{simple cylinder set} that is given by %
\[\C(h) = \bigcap_{i\in \{0, \dots, \ell(h)\}}\{\omega\in\Omega(I)\colon \omega(e_h(i)) \in U_h(i)\}.\] %
Denote by $H$ the set of histories for paths in $\Omega(I)$.  The set of simple cylinder sets is the $\pi$-system $\C(H)$ that generates the $\sigma$-algebra of \emph{cylinder sets}.  Note that any simple cylinder set is a finite intersection of sets in $\mathcal B(Y)$, and that any two probability measures that agree on the simple cylinder sets agree on the cylinder sets as well.  The probabilities associated to the simple cylinder sets are the \emph{finite dimensional distributions} of $Y$.

For any probability measure $\Pb$ on $\Omega(I)$, any $t$ in $I$, and any $B$ in $\mathcal B(\mathcal S)$, the equality%
\[\Pb_t(B) = \Pb(Y_t^{-1}(B))\] %
defines a family $(\Pb_t)_{t\in I}$ of probability measures on $\mathcal S$.  The present work involves paths in a more restrictive setting where $\mathcal S$ is, additionally, a locally compact, Abelian group with Haar measure $\mu$.  In this setting, the family $(\Pb_t)_{t \in I}$ forms a convolution semigroup of probability measures on $\mathcal S$ that gives rise to a measure $\Pb$ on a space of paths in $\mathcal S$.  The finite dimensional distributions of the measure are, for any history $h$, given by %
\begin{align}\label{Framework:EQ:FormulaFDD}
\Pb(\C(h)) = \int_{U_h(0)}\cdots \int_{U_h(\ell(h))} \prod_{i=1}^{\ell(h)}{\rm d}\Pb_{e_h(i)-e_h(i-1)}(x_i-x_{i-1}),
\end{align}  %
and $\Pb(\C(h))$ is nonzero only if $U_h(0)$ contains $0$.  In the case of a Brownian motion on $\mathcal S$, the probability measures come from a convolution semigroup of probability density functions $(\rho(t,\cdot))_{t>0}$, so that \eqref{Framework:EQ:FormulaFDD} becomes %
\begin{align}\label{Framework:EQ:FormulaFDDensity}
\Pb(\C(h)) = \int_{U_h(1)}\cdots \int_{U_h(\ell(h))} \prod_{i=1}^{\ell(h)}\rho(e_h(i)-e_h(i-1), x_i-x_{i-1})\,{\rm d}\mu(x_1)\cdots{\rm d}\mu(x_{\ell(h)}).
\end{align} %
The Kolmogorov extension theorem guarantees that the measures on the simple cylinder sets extend to a measure on the $\sigma$-algebra of cylinder sets of $F(I\colon \mathcal S)$.  %Certain moment estimates guarantee the existence of a version of the process with sample paths in $D(I\colon \mathcal S)$.

The formal construction of finite dimensional distributions for a process based on some physical intuition about the process typically precedes the verification that there is a process with the proposed finite dimensional distributions.  For this reason, it is useful to refer to the construction of the finite dimensional distributions for a stochastic process as the construction of an \emph{abstract stochastic process} $\tilde{Y}$ and the construction of the law for a random variable without the specification of its domain as the construction of an \emph{abstract random variable} $\tilde{X}$.  A model for the abstract random variable $\tilde{X}$ is a random variable with the same law as $\tilde{X}$.  A model for the abstract stochastic process $\tilde{Y}$ is a stochastic process with the same finite dimensional distributions as $\tilde{Y}$.  To distinguish the probabilities associated with the abstract random variable $\X$ from those associated with a model for $\X$, write $\Prob(\tilde{X}\in A)$ to mean the probability that $\tilde{X}$ takes a value in some Borel set $A$.  Adopt a similar notation for abstract stochastic processes.

It is also helpful to adopt a notation that distinguishes between the discrete-time and continuous-time stochastic processes that the present work involves.  In the discrete-time setting, the time interval is $\mathds N_0$ and the paths are paths in a discrete space, so $F(\mathds N_0\colon \mathcal S)$ is the space of continuous paths.  In this case, write $(F(\mathds N_0\colon \mathcal S), \Pb, S)$ to denote the stochastic process rather than $(\Omega(I\colon \mathcal S), \Pb, Y)$, where for each $n$ in $\mathds N_0$, %
\begin{equation}\label{Framework:Def:SnDef}S_n(\omega) = \omega(n).\end{equation} %
For any history $h$ for paths in $F(\mathds N_0\colon \mathcal S)$, %
\begin{align}\label{Framework:EQ:FormulaFDMass}
\Pb(\C(h)) = \sum_{x_0\in U_h(0)}\cdots \sum_{x_{\ell(h)}\in U_h(\ell(h))} \prod_{i=1}^{\ell(h)}\Pb(S_{e_h(i)}-S_{e_h(i-1)} = x_i-x_{i-1}),
\end{align} %
and is nonzero only if $U_h(0)$ contains $0$.

\subsection{Spatiotemporal embeddings}

Any locally compact topological field is either connected, in which case it is $\mathds R$ or $\mathds C$, or it is totally disconnected.  For this reason, the present convention is that a \emph{local field} is a locally compact, totally disconnected topological field.  

Henceforth, take $m$ to vary in $\mathds N_0$, the field $\mathds F$ to be either $\mathds R$ or a local field $K$, and $V$ to be a normed vector space over $\mathds F$ with norm $\|\cdot\|$.  A countable, discrete Abelian group $(G, +)$ is a \emph{primitive group for a sequence of approximations of $V$} if there is a positive null sequence $(\delta_m)$ and a sequence of injections $(\Gamma_m)$ from $G$ to $V$ with these properties: 
\begin{enumerate}
\item[(1)] For any $v$ in $V$, there is a $g$ in $G$ so that \[\|\Gamma_m(g)-v\| \leq \delta_m;\]
\item[(2)] For any $g_1$ and $g_2$ in $G$, \[\|\Gamma_m(g_1) - \Gamma_m(g_2)\| < \delta_m \implies g_1 = g_2.\]
\end{enumerate}
The sequences $(\delta_m)$ and $(\Gamma_m)$ are, respectively, the \emph{sequence of spatial scales} and the \emph{sequence of spatial embeddings of $G$}.  For any other positive null sequence $(\tau_m)$, a sequence of \emph{spatiotemporal embeddings} of $\mathds N_0\times G$ into $[0,\infty)\times V$ that has $(\Gamma_m)$ as its sequence of spatial embeddings of $G$ and $(\tau_m)$ as its \emph{sequence of time scales} is a sequence of functions $(\iota_m)$ from $\mathds N_0\times G$ to $[0,\infty)\times V$ that is given for each $m$ and each $(n, g)$ in $\mathds N_0\times G$ by \[\iota_m(n,g) = (n\tau_m, \Gamma_m(g)).\]

For any stochastic process $(F(\mathds N_0\colon G), \Pb, S)$ and any $n$ in $\mathds N$, denote by $X_n$ the random variable \[X_n = S_{n+1} - S_n.\]  The $X_n$ are the \emph{increments} of $(F(\mathds N_0\colon G), \Pb, S)$.  The stochastic process $(F(\mathds N_0\colon G), \Pb, S)$ is a \emph{primitive process} in $G$ if $S_0$ is almost surely equal to $0$, and the increments of the process are independent and identically distributed.  Construction of a primitive process involves the specification of a law for an abstractly defined random variable $\X$ that takes values in $G$, and the specification of a sequence of independent abstractly defined random variables $(\X_i)$, each of which have the same law as $\X$.  For any natural number $n$, the equation \begin{equation}\label{Sec:Framework:PrimDefSum}\s_n = \s_0 + \X_1 + \cdots + \X_n\end{equation} determines the abstract stochastic process $\s$.  The Kolmogorov extension theorem guarantees the existence of a stochastic process whose finite dimensional distributions are the probability measures on the simple cylinder sets of $F(\mathds N_0\colon G)$ that $\s$ and \eqref{Framework:EQ:FormulaFDMass} together determine.  Denote this process by $(F(\mathds N_0\colon G), \PbG, S)$.

Use the symbol $S$ ambiguously to identify both the stochastic process and the underlying function on $\mathds N_0 \times F(\mathds N_0\colon G)$.  There is a natural way that $\iota_m$ acts to transform a history $h$ for paths in $F([0,\infty)\colon V)$ to a history $h^m$ for paths in $F(\mathds N_0\colon G)$.  Initially, define $h^m$ to be the finite sequence that is given for any $i$ in $\mathds N_0\cap [0, \ell(h)]$ by \[h^m(i) = \left(\floor*{\tfrac{e_h(i)}{\tau_m}}, \Gamma_m^{-1}(U_h(i))\right).\]  The sequence of time points for $h^m$ may fail to be an epoch since distinct time points that are sufficiently close may map to the same time point.  In such cases, remove all repeated instances of a time point except one, along with their corresponding places in the route. Replace the remaining value of the route, which corresponds to the remaining instance of the repeated time point, with the intersection of all values of the route at the repeated time point.

The functions $\iota_m$ and the process $S$ together produce a collection of measures on the simple cylinder sets of $F([0,\infty)\colon V)$ in the following way:  For any history $h$ for paths in $F([0,\infty)\colon V)$, \begin{equation}\label{EmbeddedProcessFDD}\Pbm(\C(h)) = \PbG(\C(h^m)).\end{equation}  A formal application of \eqref{Framework:EQ:FormulaFDMass} that neglects to take into account the fact that the resulting sequence of time points may no longer be an epoch produces the same probability as \eqref{EmbeddedProcessFDD}.

The Kolmogorov Extension theorem guarantees the existence of a unique probability measure $\Pbm$ on $F([0,\infty)\colon V)$ that restricts to the given measures on the simple cylinder sets.  If the process $(F([0,\infty)\colon V), \Pbm, Y_t)$ has a version with paths in $D([0,\infty)\colon V)$, again use the symbol $\Pbm$ to designate the measure for this process.  Express the relationship between $\iota_m$, $S$, and $\Pbm$ by stating that $\iota_m$ and $S$ induce the measure $\Pbm$ on $D([0,\infty)\colon V)$, with the stochastic process $(D([0,\infty)\colon V), \Pbm, Y_t)$ being the image of $S$ under the spatiotemporal embedding $\iota_m$.  Verification of a certain moment estimate for each $m$ guarantees that there is a version of the process with sample paths in $D([0,\infty)\colon V)$.

In the present setting, to say that a Brownian motion $(D([0,\infty)\colon V), \Pb, Y)$ is a \emph{scaling limit} means that there is a primitive process $(F(\mathds N_0\colon G), \PbG, S)$ together with a sequence of spatiotemporal embeddings $(\iota_m)$ of $\mathds N_0\times G$ into $[0, \infty)\times V$ that induce a sequence of measures $(\Pbm)$ on $D([0,\infty)\colon V)$, and $(\Pbm)$ converges weakly to $\Pb$.  Recall that $(\Pbm)$ converges weakly to $\Pb$ means that for any bounded continuous function $f$ on $D([0,\infty)\colon V)$, \[\int_{D([0,\infty)\colon V)} f(\omega)\,{\rm d}\Pbm \to \int_{D([0,\infty)\colon V)} f(\omega)\,{\rm d}\Pb.\]  Verification that certain moment estimates for $\Pbm$ are uniform in $m$ implies that $(\Pbm)$ is uniformly tight, that is, for any positive $\varepsilon$, there is a compact subset $C_\varepsilon$ of $D([0,\infty)\colon V)$ so that for any $m$ in $\mathds N_0$, $\Pb(C_\varepsilon)$ is in $(1-\varepsilon, 1]$.  Uniform tightness together with the convergence of the finite dimensional distributions of the measures to the finite dimensional distributions of $\Pb$ implies the weak convergence of $(\Pbm)$ to $\Pb$ \cite{bil1, cent}.

\subsection{The classical setting}

Equip $C([0,\infty) \colon \mathds R)$, the set of continuous paths in $\mathds R$ with domain $[0,\infty)$, with the topology of uniform convergence on compacta.  The space of continuous paths is a closed subset of $D([0, \infty)\colon\mathds R)$.  

For any positive real number $D$, the diffusion equation with diffusion constant $D$ is this equation: \begin{equation}\label{Intro:DiffusionEquation}\frac{\partial u}{\partial t} = \frac{D}{2}\frac{\partial^2u}{{\partial x}^2}.\end{equation} The fundamental solution to \eqref{Intro:DiffusionEquation} is the function $\rho$ that for any pair $(t,x)$ in $(0,\infty)\times \mathds R$ is given by \begin{equation}\rho(t,x) = \frac{1}{\sqrt{2\pi Dt}}\exp\left(-\tfrac{x^2}{2Dt}\right).\end{equation} The set $\{\rho(t, \cdot)\colon t\in \mathds R_+\}$ forms a convolution semigroup of probability density functions that determines by \eqref{Framework:EQ:FormulaFDDensity} a probability measure $\Pb$ on $C([0, \infty)\colon\mathds R)$ that is concentrated on the paths that are at $0$ at time $0$.  This measure is the \emph{Wiener measure}.  When studying questions about the approximation of Brownian motion by discrete time random walks, it is helpful to view the measure $\Pb$ as being a measure on $D([0, \infty)\colon\mathds R)$ that gives full measure to $C([0, \infty)\colon\mathds R)$.  It is important to clarify that the Kolmogorov existence theorem initially guarantees that there is a measure on $F([0, \infty)\colon\mathds R)$ with the specified finite dimensional distributions, but the subset of continuous paths (and Skorohod paths as well) is not even a measurable subset of this space.  The moment estimates only guarantee that there is a process with a space of more analytically well-behaved sample paths that has the same finite dimensional distributions.  

Take $\X$ to be an abstract random variable with this law: \[\begin{cases}{\rm Prob}\big(\X = -1\big) = \frac{1}{2}&\mbox{}\\[.2em]{\rm Prob}\big(\X = 1\big) = \frac{1}{2}.&\mbox{}\end{cases}\] Denote by $\s$ the abstract $\mathds Z$-valued stochastic process that $\X$ and \eqref{Framework:EQ:FormulaFDMass} together determine.  Use linearity of the mean and additivity of the variance for sums of independent random variables to obtain for any natural numbers $n$, $n_1$, and $n_2$, with $n_2$ larger than $n_1$, the equalities \[{\mathds E}[\s_n] = 0 \quad \text{and} \quad  {\rm Var}[\s_{n_2} - \s_{n_1}] = n_2-n_1.\]

For any sequence of spatial scales $(\delta_m)$ and time scales $(\tau_m)$, denote by $\Gamma_m$ and $\iota_m$  the functions that for any $(n, z)$ in $N_0\times \mathds Z$ are given by \[\Gamma_m(z) = \delta_mz \quad \text{and} \quad \iota_m(n,z) = (n\tau_m, \Gamma_m(z)).\]  Denote by $\EXm$ the expected value with respect to the measure $\Pbm$ that \eqref{EmbeddedProcessFDD} determines, and obtain for any strictly increasing sequence $(t_1, t_2, t_3)$ in $[0,\infty)$ the inequality \begin{equation}\label{ClassicalDRTMoment}\EXm\big[\left|Y_{t_2}- Y_{t_1}\right|^2\left|Y_{t_3} - Y_{t_2}\right|^2\big] \leq \frac{\delta_m^4}{\tau_m^2}\(t_3 - t_1\)^2,\end{equation} which implies that the process has a version with sample paths in $D([0,\infty)\colon \mathds R)$ \cite{bil1, cent}. Once again use the symbol $\Pbm$ to identify the probability measure for this version, and take $\iota_mS$ to be the process $(D([0,\infty)\colon \mathds R), \Pbm, Y)$.  If there is a positive constant $D$ with \begin{equation}\label{eqn:dtreal}\frac{\delta_m^2}{\tau_m}\to D,\end{equation} then the estimate given by \eqref{ClassicalDRTMoment} is uniform in $m$, which guarantees the uniform tightness of $(\Pbm)$. Uniform tightness and the convergence on the simple cylinder sets of $(\Pbm)$ to $\Pb$ together imply the weak convergence of $(\Pbm)$ to $\Pb$ \cite{bil1,cent}.  Although each $\Pbm$ gives full measure to the $\delta_m\mathds Z$-valued step functions whose jumps occur in $\tau_m\mathds N$ almost surely, the measure $\Pbm$ is concentrated on the continuous functions.

\section{The Limiting Process and its State Space}\label{Sec:LimitingProcess}

\subsection{Vector spaces over local fields}

Any locally compact topological field $K$ is a locally compact, additive Abelian group $(K, +)$, and so has a Haar measure $\mu_K$ that is unique up to scaling.  The multiplicative structure of the field takes any nonzero element $a$ of $(K,+)$ to an automorphism of $K$ that scales the measure of any $\mu_K$-measurable subset by the same factor. Denote this factor by $|a|_K$, so that for any $\mu_K$-measurable set $B$ with finite, nonzero measure, \[|a|_K = \frac{\mu_K(aB)}{\mu_K(B)}.\]  Weil refers to the value $|a|_K$ as the \emph{module} of $a$ \cite{Weil}.  The study of the modules of $K$ is central to the study of topological fields.  The function $|\cdot|_K$ that takes any $a$ in $K$ to its module is an absolute value on $K$ that endows $K$ with the structure of a complete metric space whose topology is the original topology of $K$.  To compress notation, the $K$ in the notation for the absolute value will be suppressed whenever the meaning of the absolute value is unambiguous.

There are two distinct settings:  A local field $K$ is of characteristic 0 or it has positive characteristic.  In the characteristic 0 case, $K$ is either the $p$-adic numbers, $\mathds Q_p$, or a finite field extension of $\mathds Q_p$.  In the positive characteristic case, $K$ is the field $\mathds F_q((t))$, the formal Laurent series over a finite field $\mathds F_q$ with $q$ elements.  The \emph{ring of integers} of $K$ is the maximal proper subring $\O_K$ of $K$, the unit ball in $K$.  In the $\mathds Q_p$ setting, the ring of integers is $\mathds Z_p$, the $p$-adic completion of $\mathds Z$ in $\mathds Q_p$.  The \emph{prime ideal} of $K$ is the ring $\P_K$, which is the unique maximal ideal of $\O_K$.  In the $\mathds Q_p$ setting, the prime ideal is the ring $p\mathds Z_p$.  Any element $\beta$ that satisfies the equality \[\P_K = \beta \O_K\] is a \emph{uniformizer}.   The quotient $\O_K\slash \P_K$ is a finite field with $q$ elements, the \emph{residue field of $K$}, where for some prime $p$, there is a natural number $f$ so that \[q = p^f, \quad |\beta| = \frac{1}{q}, \quad \text{and} \quad |K| = \{0, q^n\colon n\in \mathds Z\}.\]

For any complete set $\Rep(\O_K\slash \P_K)$ of representatives of $\O_K\slash \P_K$, any uniformizer $\beta$ of $K$, and any $x$ in $K$, there is a unique function $a_x$ from $\mathds N_0$ to $\Rep(\O_K\slash \P_K)$ so that \[x = \sum_{k\in \mathds Z} a_x(i)\beta^i.\]  The support of $a_x$ is a bounded below subset of $\mathds Z$ and \[|x| = q^{N},\] where $-N$ is the minimal element in the support of $a_x$.  Denote by $\{\cdot\}$ the function that takes any $x$ in $K$ to its \emph{fractional part}, that is \[\{x\} = \begin{cases}\sum_{k\in -\mathds N} a_x(i)\beta^i &\mbox{if }|x|>1\\0&\mbox{otherwise},\end{cases}\] so that for any $x$ and $y$ in $K$, \[\{x\} = \{y\} \quad \text{if and only if}\quad x-y \in \O_K.\]  %Note that any positive power of $|\cdot|_K$ is again an absolute value for $K$ that induces a topologically equivalent complete metric on $K$.

Specification of the norm on a vector space over $K$ follows our earlier study of Brownian motion in more than one $p$-adic variable \cite{RW:JFAA:2023}, but corrects a typographical error in that work. For any natural number $d$, the orthogonal group in $d$ dimensions, $O_d(\mathds R)$, is the maximal compact subgroup of $GL_d(\mathds R)$.  Similarly, for any $d$-dimensional vector space $V$ over $K$, $GL_d(\O_K)$ is the maximal compact subgroup of $GL_d(K)$.  Choose a basis for $V$ to identify $V$ with $K^d$, and take $\|\cdot\|$ to be the max-norm on $K^d$ that is given for any $\ve{x}$ in $K^d$ by \[\|\ve{x}\| = \max_{1\leq i\leq d}(|x_i|), \quad \text{where} \quad \ve{x} = (x_1, \dots, x_d).\] For any $\ve{x}$ in $K^d$ and any $T$ in $GL_d(\O_K)$, the vector $T(\ve{x})$ is determined by multiplying a matrix with coefficients in $\O_k$ by $\ve{x}$, and so \[\|T(\ve{x})\| \leq \|\ve{x}\|.\]  Since $T^{-1}$ is also in $GL_d(\O_K)$, the reverse inequality also holds, which implies Proposition~\ref{GLd-Preserves-norm}.

\begin{proposition}\label{GLd-Preserves-norm}
For any $\ve{x}$ in $K^d$ and any $T$ in $GL_d(\O_K)$, \[\|T\ve{x}\| = \|\ve{x}\|.\] 
\end{proposition}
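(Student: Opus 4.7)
The plan is to prove the two-sided inequality $\|T\ve{x}\| \leq \|\ve{x}\|$ and $\|\ve{x}\| \leq \|T\ve{x}\|$ separately, and then combine. The text just before the statement already sketches this strategy; I would flesh it out by making the ultrametric estimate explicit and being careful about why $T^{-1}$ also lies in $GL_d(\O_K)$.

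For the forward inequality, write $T = (t_{ij})$ with each entry $t_{ij}$ in $\O_K$, so that $|t_{ij}| \leq 1$. The $i$-th coordinate of $T\ve{x}$ is $\sum_{j=1}^d t_{ij} x_j$, and the strong (non-Archimedean) triangle inequality satisfied by the absolute value $|\cdot|_K$ gives
\[ \Bigl| \sum_{j=1}^d t_{ij} x_j \Bigr| \leq \max_{1 \leq j \leq d} |t_{ij}||x_j| \leq \max_{1 \leq j \leq d} |x_j| = \|\ve{x}\|. \]
Taking the maximum over $i$ yields $\|T\ve{x}\| \leq \|\ve{x}\|$.

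For the reverse inequality, the key observation is that $GL_d(\O_K)$, as the maximal compact subgroup of $GL_d(K)$, is a group and in particular is closed under inversion; equivalently, an invertible matrix with entries in $\O_K$ belongs to $GL_d(\O_K)$ precisely when its determinant is a unit in $\O_K$, and this property is manifestly preserved by passing to $T^{-1}$. Applying the forward inequality already proved, but to the element $T^{-1}$ in $GL_d(\O_K)$ acting on the vector $T\ve{x}$ in $K^d$, gives
\[ \|\ve{x}\| = \|T^{-1}(T\ve{x})\| \leq \|T\ve{x}\|. \]
Combining the two inequalities yields the equality $\|T\ve{x}\| = \|\ve{x}\|$.

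There is no real obstacle here; the only point that deserves care is the assertion that $T^{-1}$ inherits membership in $GL_d(\O_K)$, which is what makes the non-Archimedean situation genuinely cleaner than the real case (where $O_d(\mathds R)$, not all of $GL_d(\mathds R)$ with entries bounded by $1$, is the analogous norm-preserving group). Once this is noted, the proof is a three-line application of the ultrametric inequality in each direction.
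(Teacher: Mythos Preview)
Your proof is correct and follows exactly the approach the paper itself uses in the paragraph immediately preceding the proposition: bound $\|T\ve{x}\|\le\|\ve{x}\|$ using that the entries of $T$ lie in $\O_K$, then apply the same bound to $T^{-1}\in GL_d(\O_K)$ to get the reverse inequality. Your write-up simply makes the ultrametric estimate and the closure of $GL_d(\O_K)$ under inversion explicit, which the paper leaves implicit.
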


The $GL_d(\O_K)$ invariance of the max-norm makes it a natural analogue in setting of $d$-dimensional vector spaces over $K$ of the Euclidean norm on $\mathds R^d$, and ensures that the norm of an element is independent of the choice of basis.  With this in mind, henceforth take $V$ to be $K^d$ equipped with the max-norm.

\subsection{The Fourier transform on $K^d$}

A rank 0 additive character $\phi$ for $K$ is a continuous homomorphism from $(K, +)$ to the circle group $\mathds S^1$ with the property that for any integer $n$, \[\int_{\beta^n\O_K} \phi(x)\,{\rm d}\mu(x) = 0 \quad \text{if and only if} \quad n > 0.\]  The additive character $\chi$ on $K$ that is given for any $x$ in $K$ by \[\chi(x) = {\rm e}^{2\pi\sqrt{-1}\{x\}}\] is an example of a rank 0 character.  Take the Haar measure on $K$ to be normalized so that $\mu_K(\O_K)$ is equal to $1$, and henceforth suppress $K$ in the notation for the Haar measure.  The normalized Haar measure $\mu_d$ on $K^d$ gives the unit ball $\O_K^d$ in $K^d$ unit measure and is the product measure induced by $\mu$, so that for any Borel set $B$ in $K$, \[\mu_d(B^d) = (\mu(B))^d, \quad \text{and so} \quad \int_{(\beta^{-n}\O_K)^d} {\rm d}\mu_d(\ve{x}) = q^{nd}.\]  For any $\ve{x}$ and $\ve{y}$ in $K^d$, denote by $\ve{x}\cdot \ve{y}$ the quantity \[\ve{x}\cdot \ve{y} = x_1y_1 +\cdots+x_dy_d, \quad \text{where} \quad \ve{x} = (x_1, \dots, x_d) \quad \text{and} \quad \ve{y} = (y_1, \dots, y_d).\]  For any character $\phi$ that acts on $K^d$, there is a $\ve{y}$ in $K^d$ so that for any $\ve{x}$ in $K^d$, \[\phi(\ve{x}) = \chi(\ve{x}\cdot \ve{y}).\]  Identify $K^d$ with its (isomorphic) Pontryagin dual by the function \[\ve{y} \mapsto \chi(\;\cdot\,\ve{y}), \quad \text{where} \quad \chi(\;\cdot\,\ve{y})(\ve{x}) = \chi(\ve{x}\cdot \ve{y}).\]  The Fourier transform $\F$ on $K^d$ and its inverse are the unitary extensions to $L^2(K^d)$ of the transformations that are initially defined for any $f$ in $L^1(K^d)\cap L^2(K^d)$ by \[(\F f)(\ve{x}) = \int_K \chi(-\ve{x}\cdot \ve{y})f(\ve{y})\mu_d(\ve{y}) \quad \text{and} \quad (\F^{-1}f)(\ve{x}) = \int_K \chi(\ve{x}\cdot \ve{y})f(\ve{y})\mu_d(\ve{y}).\] Note that the choice of normalization of the Haar measure and the choice of $\chi$ as a rank 0 character together imply that $\F$ is a unitary transformation on its initial domain.

\subsection{Brownian motion in $K^d$}

Denote by $\BB_d(n)$ the ball of radius $q^n$ in $K^d$, which is the set $(\beta^{-n}\mathcal O_K)^d$.  Denote by $\SS_d(n)$ the set of all points in $K^d$ with norm equal to $q^n$.  The scaling property of the Haar measure implies that %
\begin{equation}\label{EQ:NormProd:Ball_Sphere_measure_d}
\mu_d(\SS_d(n)) = q^{nd}- q^{(n-1)d} = q^{nd}\left(1 - \tfrac{1}{q^d}\right).
\end{equation} %
To compress notation, write ${\rm d}\ve{x}$ rather than ${\rm d}\mu_d(\ve{x})$ in the notation for integration in $K^d$.  The sum of the $p^{\rm th}$-roots of unity is equal to $0$, and $\chi$ is identically equal to 1 on $\O_K$, so for any $n$ in $\mathds Z$,
\begin{equation}\label{lemma:NormProd:BasicCharInt}
\int_{\BB_1(n)} \chi(x)\,{\rm d}x = \begin{cases}q^{n}&\mbox{if }n\leq 0\\0&\mbox{if }n > 0.\end{cases}
\end{equation}
For any $\ve{y}$ in $K^d$, the scaling property of the measure together with \eqref{lemma:NormProd:BasicCharInt} implies that 
\begin{equation}\label{lemma:NormProd:BasicCharIntd}
\int_{\BB_d(n)} \chi(\ve{x}\cdot \ve{y})\,{\rm d}\ve{x} = \begin{cases}q^{nd}&\mbox{if }q^{n}\leq \frac{1}{\|\ve{y}\|}\\0&\mbox{if }q^{n}> \frac{1}{\|\ve{y}\|}.\end{cases}
\end{equation}
For any subset $S$ of $K^d$, take ${\mathds 1}_S$ to be the indicator function on $S$ and use equation~\ref{lemma:NormProd:BasicCharIntd} to obtain Lemma~\ref{lem:NormProd:CharIntd}

\begin{lemma}\label{lem:NormProd:CharIntd}
For any integer $n$, 
\begin{equation*}
\int_{\BB_d(n)} \chi(\ve{x}\cdot\ve{y}\dwsp)\,{\rm d}\ve{x} = q^{nd}{\mathds 1}_{\BB_d(-n)}(\ve{y}\dwsp).
\end{equation*}
\end{lemma}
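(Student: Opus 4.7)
The plan is to observe that Lemma~\ref{lem:NormProd:CharIntd} is a repackaging of equation~\eqref{lemma:NormProd:BasicCharIntd} using indicator-function notation. The two displayed cases in \eqref{lemma:NormProd:BasicCharIntd} depend on whether the inequality $q^n\leq 1/\|\ve{y}\|$ holds, and the goal is to recognize that this inequality is precisely the membership condition $\ve{y}\in\BB_d(-n)$.

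First I would handle the $\ve{y}=\ve{0}$ case separately: the integrand is identically $1$, so the integral equals $\mu_d(\BB_d(n))=q^{nd}$, and since $\ve{0}\in\BB_d(-n)$ for every $n$, the right-hand side also equals $q^{nd}$. For $\ve{y}\neq\ve{0}$, I would use that the range of $\|\cdot\|$ on $K^d\setminus\{\ve{0}\}$ is the discrete set $\{q^k\colon k\in\mathds Z\}$, so $\|\ve{y}\|=q^{-k}$ for some integer $k$. The inequality $q^n\leq 1/\|\ve{y}\|=q^k$ is then equivalent to $n\leq k$, which is equivalent to $\|\ve{y}\|=q^{-k}\leq q^{-n}$, which by the definition $\BB_d(-n)=(\beta^{n}\O_K)^d$ (the ball of radius $q^{-n}$) is equivalent to $\ve{y}\in\BB_d(-n)$. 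Substituting this equivalence into \eqref{lemma:NormProd:BasicCharIntd} collapses the two cases into the single expression $q^{nd}{\mathds 1}_{\BB_d(-n)}(\ve{y})$, which is the claim.

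Since the lemma is a direct rewriting, there is no genuine obstacle. The only point requiring any care is the book-keeping on the equivalence $q^n\leq 1/\|\ve{y}\|\iff \ve{y}\in\BB_d(-n)$, which relies on the discreteness of $|K|$ and on the convention that the ball of radius $q^{-n}$ is the set of vectors with norm at most $q^{-n}$; this is immediate from the ultrametric structure.
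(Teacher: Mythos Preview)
Your proposal is correct and matches the paper's approach exactly: the paper simply states that one uses equation~\eqref{lemma:NormProd:BasicCharIntd} to obtain the lemma, and you have spelled out precisely how the two cases there collapse into the indicator function. Your separate treatment of $\ve{y}=\ve{0}$ is a small bonus of care, since \eqref{lemma:NormProd:BasicCharIntd} as written leaves $1/\|\ve{y}\|$ undefined at the origin.
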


Denote by $SB(K^d)$ the space of \emph{Schwartz-Bruhat} functions on $K^d$.  These are the complex-valued, locally constant, compactly supported functions on $K^d$.  The Schwartz-Bruhat functions are dense in $L^2(K^d)$ and, unlike the smooth, compactly supported functions on $\mathds R^d$, they are closed under the Fourier transform.  For any positive real number $b$, take ${\mathcal M}_b$ to be the multiplication operator that acts on $SB(K^d)$ by \[({\mathcal M}_bf)(\ve{x}) = \|\ve{x}\|^bf(\ve{x}).\]  Denote by $\Delta_b^\prime$ the self-adjoint closure of the densely defined, essentially self-adjoint operator that acts on $SB(\mathds K^d)$ by \begin{equation}\label{EQ:NormProd:pseudoDelta}\big(\Delta_b^\prime f\big)(\ve{x}) = \big(\F^{-1}{\mathcal M_b}\F f\big)\!(\ve{x}).\end{equation}  Any operator $T$ that acts on functions in $L^2(K^d)$ extends to act on complex valued functions $f$ on $(0,\infty)\times K^d$ that have the property that for any $t$ in $(0,\infty)$, the function $f(t,\cdot)$ is in the domain of $T$.  The function $Tf$ is the function that is given by \[Tf(t,\ve{x}) = \big(Tf(t,\cdot)\big)(\ve{x}).\]  Take $\Delta_b$ to be the extension of $\Delta_b^\prime$ to the set of all $f$ on $(0,\infty)\times K^d$ with the property that for any positive $t$, $f(t, \cdot)$ is in the domain of $\Delta_b^\prime$.  This extension is the \emph{Taibleson-Vladimirov operator with exponent} $b$.  Extend similarly the Fourier and inverse Fourier transforms to act on functions on $(0,\infty)\times K^d$, but continue to denote them by $\F$ and $\F^{-1}$.  For any positive real number $\sigma$, the pseudo-differential equation \begin{align}\label{EQ:NormProd:DiffusionEQ} \dfrac{{\rm d}f(t,\ve{x})}{{\rm d}t} = -\sigma\Delta_b f(t,\ve{x})\end{align} is a \emph{$d$-dimensional diffusion equation over $K$} and has fundamental solution $\rho$, where for each $t$ in $(0,\infty)$ and for each $\ve{x}$ in $K^d$, \[\rho(t,\ve{x}) = \left(\mathcal F^{-1}\e^{-\sigma t\|\cdot\|^b}\right)\!(\ve{x}).\]  %

Follow Varadarajan's approach \cite{var97} with the necessary modifications to include the diffusion constant $\sigma$ to see that $\rho(t,\cdot)$ is a probability density function that gives rise to a probability measure $\Pb$ on $D([0,\infty) \colon K^d)$ that is concentrated on the set of paths that are at the origin at time $0$.  The \emph{max-norm process} is the stochastic process $\big(D([0,\infty), \Pb, Y\big)$ and is a \emph{Brownian motion} in $K^d$ with \emph{diffusion constant} equal to $\sigma$ and \emph{diffusion exponent} equal to $b$.  Write the integrals of characters over circles as differences of integrals of characters over balls to see that for any $\ve{x}$ in $K^d$, the density function $\rho(t,\cdot)$ for $Y_t$ is given by %
\begin{align}\label{pdf:NormProcess:d}
\rho_d(t,\ve{x}) & = \int_{K^d} \chi(\ve{x}\cdot \ve{y})\e^{-\sigma t\|\ve{y}\|^b}\,{\rm d}\ve{y}\notag\\& = \sum_{n\in\mathds Z}\Big(\e^{-\sigma t q^{nb}} - \e^{-\sigma  t q^{(n+1)b}}\Big)q^{nd}{\mathds 1}_{\BB_d(-n)}(\ve{x}).
\end{align}

\section{The Primitive Process}\label{Sec:PrimitiveProcess}

\subsection{The primitive discrete space}

The ring $\O_K$ is an open compact neighborhood of the identity, and so the quotient group $K\slash\O_K$ is a discrete group.   Take $G$ to be the group $(K\slash\O_K)^d$, the set $(K\slash\O_K)^d$ endowed with componentwise addition.  Denote by $[\cdot]$ the function that takes $K$ to $G$ that is defined for each $(x_1, \dots, x_d)$ in $K^d$ by \[[(x_1, \dots, x_d)] = (x_1 + \O_K, \dots, x_d + \O_K).\]  The measure $\mu_G$ on $G$ is the counting measure, so that for any $B$ in $\mathcal B(K^d)$ that is a union of balls of radius $1$, \[\mu(B) = \mu_G([B]).\] The max-norm on $K^d$ induces a norm on $G$. Namely, for any $\ve{x}$ in $K^d$, \[\begin{cases}\|[\ve{x}]\| = \|\ve{x}\| &\mbox{if }\|\ve{x}\| \ge 1\\\|[\ve{0}]\| = 0.\end{cases}\]  For any integrable function $f$ on $G$, \begin{equation}\label{Eq:PrimRef:IntandSumatm}\int_{G} f([\ve{x}]) \;{\rm d}\mu_G([\ve{x}]) = \sum_{[\ve{x}]\in G} f([\ve{x}]).\end{equation}  

Since any ball in $K^d$ of radius at least $1$ is a disjoint union of balls of radius $1$, the local constancy of any function that is defined on $K^d$ by \[\ve{x} \mapsto f([\ve{x}])\] implies Proposition~\ref{StateSpaces:Intmoverballs}.

\begin{proposition}\label{StateSpaces:Intmoverballs}
For any $\mathds C$-valued integrable function $f$ on $G$ and any ball $B$ of radius at least $1$ in $K^d$,  \[\int_{[B]} f([\ve{x}]) \,{\rm d}\mu_G([\ve{x}]) = \int_B f([\ve{x}]) \,{\rm d}\ve{x}.\]
\end{proposition}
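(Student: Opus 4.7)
The plan is to exploit three facts: the normalization $\mu_d(\mathcal{O}_K^d)=1$, the ultrametric partition of any ball of radius $\geq 1$ into radius-$1$ sub-balls, and the fact that on each radius-$1$ sub-ball the composite $\ve{x}\mapsto f([\ve{x}])$ is constant. Together these should immediately collapse the left-hand sum (which by \eqref{Eq:PrimRef:IntandSumatm} is just $\sum_{[\ve{y}]\in [B]} f([\ve{y}])$) onto the right-hand integral.

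\textbf{Step 1: Partition $B$.} Since the max-norm is ultrametric and $B$ has radius $q^n$ for some integer $n\geq 0$, write $B$ as a disjoint union $B=\bigsqcup_{i\in I}B_i$ of balls of radius $1$. Each $B_i$ is a coset of the open subgroup $\O_K^d$, so $\mu_d(B_i)=\mu_d(\O_K^d)=1$.

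\textbf{Step 2: Local constancy.} Two points $\ve{x},\ve{y}\in K^d$ satisfy $[\ve{x}]=[\ve{y}]$ exactly when $\ve{x}-\ve{y}\in\O_K^d$, i.e.\ when $\|\ve{x}-\ve{y}\|\leq 1$. Consequently the map $\ve{x}\mapsto[\ve{x}]$ is constant on each $B_i$; fix a representative $\ve{x}_i\in B_i$ and let $[\ve{x}_i]$ denote the common value.

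\textbf{Step 3: Bijection with $[B]$.} Distinct sub-balls $B_i\neq B_j$ are distinct cosets of $\O_K^d$, hence give $[\ve{x}_i]\neq[\ve{x}_j]$, and every class in $[B]$ is attained by exactly one $B_i$. So $i\mapsto [\ve{x}_i]$ is a bijection $I\to [B]$.

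\textbf{Step 4: Assemble.} Using additivity of the integral over the partition, Step 2, $\mu_d(B_i)=1$, Step 3, and finally \eqref{Eq:PrimRef:IntandSumatm},
\begin{align*}
\int_B f([\ve{x}])\,\d\ve{x}
&=\sum_{i\in I}\int_{B_i} f([\ve{x}])\,\d\ve{x}
=\sum_{i\in I} f([\ve{x}_i])\,\mu_d(B_i)\\
&=\sum_{i\in I} f([\ve{x}_i])
=\sum_{[\ve{y}]\in [B]} f([\ve{y}])
=\int_{[B]} f([\ve{x}])\,\d\mu_G([\ve{x}]).
\end{align*}

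There is no real obstacle here; the only thing to be careful about is making the bijection in Step~3 explicit, so that Steps~2 and~4 are not secretly double-counting. Integrability of $f$ on $G$ ensures the sum over $I$ is absolutely convergent when $I$ is infinite (which occurs when $B$ has radius strictly greater than $1$), justifying the interchange of integration and summation.
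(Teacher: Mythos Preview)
Your proof is correct and is exactly the argument the paper sketches in the sentence preceding the proposition: partition $B$ into radius-$1$ sub-balls, use that $\ve{x}\mapsto f([\ve{x}])$ is constant on each, and match sub-balls bijectively with elements of $[B]$. One small remark: since a ball of radius $q^n$ in $K^d$ is compact and contains exactly $q^{nd}$ cosets of $\O_K^d$, the index set $I$ is always finite, so your final comment about integrability justifying an infinite-sum interchange is unnecessary (though harmless).
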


Recall that the group $\O_K^d$ is the Pontryagin dual of $G$.  The canonical inclusion map from $\O_K^d$ to $K^d$ defines the dual pairing $\langle\cdot, \cdot\rangle$ that for any $([\ve{x}],\ve{y})$ in $G\times\O_K^d$ is given by \[\langle [\ve{x}], \ve{y}\rangle = \chi(\ve{x}\cdot \ve{y}).\]  For any $(\ve{x}, \ve{y}, \ve{w})$ in $K^d\times \O_K^d \times \O_K^d$, the equality \begin{align*}\chi((\ve{x}+\ve{w})\cdot \ve{y})= \chi(\ve{x}\cdot\ve{y})\chi(\ve{w}\cdot\ve{y}) = \chi(\ve{x}\cdot\ve{y}) \end{align*} implies that the definition is independent of the choice of the representative. 

The Fourier transform $\FG$ that takes $L^2(G)$ to $L^2(\O_K^d)$ and the inverse Fourier transform $\FG^{-1}$ that takes $L^2(\O_K^d)$ to $L^2(G)$ are, respectively, given for any $f$ in $L^2(G)$ and any $\tilde{f}$ in $L^2(\O_K^d)$ by \[(\FG f)(\ve{y}) = \int_{G}\langle -g,\ve{y}\rangle f([\ve{x}])\,{\rm d}\mu_G(g) \quad \text{and} \quad (\FG^{-1}\tilde{f})\left(g\right) = \int_{\O_K^d}\langle g,\ve{y}\rangle \tilde{f}(\ve{y})\,{\rm d}\ve{y}.\]

\subsection{Transition probabilities of the primitive process}

To simplify notation, for any $i$ in $\mathds N_0$ write \[\BB_G(i) = [\BB_d(i)]\quad \text{and} \quad \SS_G(i) = [\SS_d(i)].\]  Take $\X$ to be the $G$-valued abstract random variable with probability mass function $\rho_{\X}$ that is constant on each circle in $G$ of radius $q^i$ and that satisfies the equalities \begin{equation}\label{equation:PrimitiveRVLaw}\begin{cases} \Prob(\X\in \SS_G(i)) = \frac{q^b-1}{q^{ib}} &\mbox{if }i\in \mathds N\\\Prob(\X=[0]) = 0.&\mbox{}\end{cases}\end{equation}  The law for $\X$ is, therefore, given by \[\begin{cases}\rho_{\X}(\x) = \frac{q^b-1}{q^{ib}}\frac{1}{\mu_G(\SS_G(i))} &\mbox{if } i\in \mathds N\\\rho_{\X}([0]) = 0.&\mbox{}\end{cases}\] It is only slightly modified from the law previously introduced in the $\mathds Q_p$ setting \cite{WJPA} to reflect that the range of the absolute value in $K$ is the set $\{0\}\cup\{q^n\colon n\in \mathds Z\}$ and the fact that the measure of a circle in $K^d$ depends on $d$.  Denote by $\phi_{\X}$ the characteristic function of $\X$.  The determination of $\phi_{\X}$ is useful for determining the transition probabilities for the primitive stochastic process $S$ whose increments have the same law as $\X$.  With this in mind, observe that the domain of $\phi_{\X}$ is $\mathcal O_K^d$, and for any $i$ in $\mathds N$, write \[\rho(i) = \rho_{\X}([\beta^i]) = \left(\frac{q^b-1}{q^{ib}}\right)\left(\frac{q^d}{q^d-1}\right)\frac{1}{\mu_G(\BB_G(i))}\] in order to simplify the calculations that follow.  Use the equality \[\rho(i+1) = \frac{1}{q^{b+d}}\rho(i)\] to see that
\begin{equation}\label{TranProb:Eq:Difference}
\rho(i) - \rho(i+1) = \left(\frac{q^{b+d} -1}{q^{b+d}}\right)\left(\frac{q^b-1}{q^{ib}}\right)\left(\frac{q^d}{q^d-1}\right)\frac{1}{\mu_0(\BB_G(i))}.%
\end{equation}
Proposition~\ref{StateSpaces:Intmoverballs} and \eqref{lemma:NormProd:BasicCharIntd} together imply that
\begin{align}\label{TranProb:Eq:rhozeroint}
\rho(1)\int_{\BB_G([\ve{0}])}\langle -g,\ve{y}\rangle\,{\rm d}\mu_G(g) &= \left(\frac{q^b-1}{q^{b}}\right)\left(\frac{q^d}{q^d-1}\right)\frac{1}{\mu_G(\BB_G(1))}\int_{\BB_G([\ve{0}])}\langle -g,\ve{y}\rangle\,{\rm d}\mu_G(g)\notag\\
&= \left(\frac{q^b-1}{q^{b}}\right)\left(\frac{1}{q^d-1}\right){\mathds 1}_{\O_K^d}(\ve{y}),
\end{align}
and for any natural number $i$, 
\begin{align}\label{TranProb:Eq:rhoiint}
(\rho(i) - \rho(i+1))\int_{\BB_G(i)} \langle -g,\ve{y}\rangle\,{\rm d}\mu_G(g) & = (\rho(i) - \rho(i+1))\mu_G(\BB_G(i)){\mathds 1}_{\beta^i\O_K^d}(\ve{y})\notag\\%
& = \left(\frac{q^{b+d} -1}{q^b}\right)\left(\frac{q^b-1}{q^d-1}\right)\left(\frac{1}{q^{ib}}\right){\mathds 1}_{\beta^i\O_K^d}(\ve{y}).
\end{align}

To simplify notation, write \[\alpha = \frac{1}{q^d-1}\cdot\frac{q^{b+d}-1}{q^b}.\]

\begin{proposition}\label{Prop:CharPrim}
For any $\ve{y}$ in $\O_K^d$, \[\phi_{\X}(\ve{y}) = 1 - \alpha \|\ve{y}\|^b.\] 
\end{proposition}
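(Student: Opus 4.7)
The plan is to compute $\phi_{\X}(\ve{y})$ by expanding the definition $\phi_{\X}(\ve{y}) = \int_{G} \langle -g, \ve{y}\rangle\, \rho_{\X}(g)\,{\rm d}\mu_G(g)$ and exploiting the radial structure of $\rho_{\X}$. Because $\rho_{\X}$ vanishes at $[\ve{0}]$ and takes the constant value $\rho(i)$ on each circle $\SS_G(i)$, I would first group contributions by radius, writing
\[
\phi_{\X}(\ve{y}) = \sum_{i=1}^{\infty}\rho(i)\int_{\SS_G(i)}\langle -g,\ve{y}\rangle\,{\rm d}\mu_G(g).
\]
Since $\SS_G(i) = \BB_G(i)\setminus\BB_G(i-1)$ with $\BB_G(0) = \{[\ve{0}]\}$, each sphere integral splits as a difference of ball integrals, so Abel summation rewrites the series as a boundary term at $i=0$ plus a sum of first differences of the form $(\rho(i) - \rho(i+1))\int_{\BB_G(i)}\langle -g,\ve{y}\rangle\,{\rm d}\mu_G(g)$.

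The computations in equation~\eqref{TranProb:Eq:rhozeroint} and equation~\eqref{TranProb:Eq:rhoiint}, which rest on Lemma~\ref{lem:NormProd:CharIntd} and Proposition~\ref{StateSpaces:Intmoverballs}, already evaluate these pieces explicitly. The indicator ${\mathds 1}_{\beta^i\O_K^d}(\ve{y})$ appearing in the latter forces the series to terminate: if $\|\ve{y}\| = q^{-N}$ then only terms with $i \leq N$ survive, and the remaining sum collapses as a finite geometric series of common ratio $q^{-b}$ to give $\alpha(1 - q^{-Nb}) = \alpha(1 - \|\ve{y}\|^b)$. Adding the $i=0$ boundary contribution then yields $\phi_{\X}(\ve{y}) = 1 - \alpha\|\ve{y}\|^b$, provided the normalization identity $\alpha = 1 + \rho(1)$ is verified; that identity reduces to the algebraic check $q^b(q^d - 1) + (q^b - 1) = q^{b+d} - 1$.

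I expect the main obstacle to be bookkeeping rather than any deep idea: tracking signs through the Abel summation, confirming that the truncated boundary term $\rho(N+1)\,q^{Nd}{\mathds 1}_{\beta^{N+1}\O_K^d}(\ve{y})$ is absorbed into the telescoping or vanishes as $N\to\infty$, and reconciling the $\ve{y}=\ve{0}$ case (where the formula degenerates to the normalization $\phi_{\X}(\ve{0}) = 1$) with the generic $\ve{y}\neq \ve{0}$ computation. Once those details are squared away, the proposition follows from assembling \eqref{TranProb:Eq:rhozeroint}, \eqref{TranProb:Eq:rhoiint}, and the geometric series sum.
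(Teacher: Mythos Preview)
Your proposal is correct and follows essentially the same approach as the paper: decompose into spheres, rewrite as differences of ball integrals, Abel-sum to obtain the boundary term $-\rho(1)$ plus the difference series, evaluate via \eqref{TranProb:Eq:rhozeroint} and \eqref{TranProb:Eq:rhoiint}, and collapse the resulting finite geometric series. The algebraic identity you flag, $\alpha = 1 + \rho(1)$, is exactly what the paper uses implicitly in its final simplification, and the tail term $\rho(N+1)q^{Nd}$ that you worry about indeed vanishes since $\rho(i)\mu_G(\BB_G(i)) \sim q^{-ib}$.
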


\begin{proof}
Decompose the integral that defines the Fourier transform of $\rho_{\X}$ into integrals over circles in $G$ to obtain the equalities
\begin{align}\label{Secfour:Equation:PrimYk-sum}
\phi_{\X}(\ve{y}) & = \int_{G}\langle -g,\ve{y}\rangle\rho_{\X}(g)\,{\rm d}\mu_G(g)\notag\\
& = \sum_{i\in \mathds N} \rho(i) \int_{\SS_G(i)} \langle -g,\ve{y}\rangle\,{\rm d}\mu_G(g)\notag\\
& = \sum_{i\in \mathds N} \rho(i)\left\{\int_{\BB_G(i)} \langle -g, \ve{y}\rangle\,{\rm d}\mu_G(g) - \int_{\BB_G(i-1)} \langle -g,\ve{y}\rangle\,{\rm d}\mu_G(g)\right\}\notag\\
& = -\rho(1)\int_{\BB_G(0)}\langle -g,\ve{y}\rangle\,{\rm d}\mu_G(g) + \sum_{i\in \mathds N} (\rho(i) - \rho(i+1))\int_{\BB_G(i)} \langle -g,\ve{y}\rangle\,{\rm d}\mu_G(g).
\end{align}
Use \eqref{TranProb:Eq:rhozeroint} and \eqref{TranProb:Eq:rhoiint} to rewrite \eqref{Secfour:Equation:PrimYk-sum} and obtain the equality
\begin{equation}\label{Secfour:Equation:PrimYk-sumB}
\phi_{\X}(\ve{y}) =  -\left(\frac{q^b-1}{q^{b}}\right)\left(\frac{1}{q^d-1}\right){\mathds 1}_{\mathcal O_K^d}(\ve{y}) + \left(\frac{q^{b+d} -1}{q^b}\right)\left(\frac{q^b-1}{q^d-1}\right)\sum_{i\in \mathds N}\left(\frac{1}{q^{ib}}\right){\mathds 1}_{\beta^i\mathcal O_K^d}(\ve{y}).
\end{equation}
For any $\ve{y}$ in $\mathcal O_K^d$, sum the finite geometric series to obtain the equality \begin{equation}\label{TranProb:Eq:geometric}\sum_{i=1}^{-\log_q\|\ve{y}\|}\frac{1}{q^{ib}} = \frac{1}{q^b-1}(1-\|\ve{y}\|^b).\end{equation} Use \eqref{TranProb:Eq:geometric} to rewrite \eqref{Secfour:Equation:PrimYk-sumB} and obtain the equalities
\begin{align*}
\phi_{\X}(\ve{y}) & = -\left(\frac{q^b-1}{q^{b}}\right)\left(\frac{1}{q^d-1}\right) + \left(\frac{q^{b+d} -1}{q^b}\right)\left(\frac{1}{q^d-1}\right)(1-\|\ve{y}\|^b)\\%
& = 1 - \left(\frac{q^{b+d} -1}{q^b}\right)\left(\frac{1}{q^d-1}\right)\|\ve{y}\|^b.%
\end{align*}
\end{proof}

It is helpful to establish some estimates for the constant $\alpha$.  The dimension $d$ is a natural number and $q$ is at least 2, so%
\begin{equation}\label{alpha:estimateA}
0 < \alpha - 1  = 
\frac{1}{q^d - 1}\frac{q^b - 1}{q^b} <1.
\end{equation}
The inequality
\begin{equation}\label{alpha:estimateDiff}
\frac{{\rm d}}{{\rm d}b}(q^{2b+d} - q^{2b}) > \frac{{\rm d}}{{\rm d}b}(q^{b+d} - 1)
\end{equation}
together with the equality of $q^{2b+d} - q^{2b}$ and $q^{b+d} - 1$ when $b$ is equal to $0$ implies that%
\begin{equation}\label{alpha:estimateB}
\frac{\alpha}{q^b} = \frac{q^{b+d} - 1}{q^{2b+d} - q^{2b}} < 1.
\end{equation} %
Proposition~\ref{Prim:Prop:Boundonalphaoverpb} summarizes these inequalities.

\begin{proposition}\label{Prim:Prop:Boundonalphaoverpb} 
For any natural number $d$ and positive real number $b$, both $\alpha - 1$ and $\frac{\alpha}{q^b}$ are in $(0,1)$.
\end{proposition}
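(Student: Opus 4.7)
The plan is to treat the two bounds separately, reducing each to an elementary algebraic manipulation that works uniformly in the allowable parameter ranges ($d\in\mathds N$, $q\geq 2$, $b>0$).

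For the bound on $\alpha-1$, I would simply compute directly. Combining fractions gives
\[
\alpha - 1 = \frac{q^{b+d}-1}{q^b(q^d-1)} - 1 = \frac{q^{b+d}-1 - q^{b+d} + q^b}{q^b(q^d-1)} = \frac{1}{q^d-1}\cdot\frac{q^b-1}{q^b},
\]
which is precisely the decomposition in \eqref{alpha:estimateA}. The first factor lies in $(0,1]$ because $q^d-1\geq 1$ under the standing assumption $q\geq 2$, $d\geq 1$, and the second factor lies strictly in $(0,1)$ because $b>0$ forces $0<q^b-1<q^b$. The product therefore lies in $(0,1)$.

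For the bound on $\alpha/q^b$, I would rewrite
\[
\frac{\alpha}{q^b} = \frac{q^{b+d}-1}{q^{2b+d}-q^{2b}}.
\]
Positivity is immediate. The strict upper bound is equivalent to the positivity of $q^{2b+d}-q^{2b}-q^{b+d}+1$, and rather than passing through the derivative comparison suggested by \eqref{alpha:estimateDiff}, I would factor directly:
\[
q^{2b+d}-q^{2b}-q^{b+d}+1 = q^{b+d}(q^b-1) - (q^{2b}-1) = (q^b-1)\bigl(q^{b+d}-q^b-1\bigr).
\]
Since $b>0$ and $q\geq 2$ give $q^b-1>0$, and since $q^{b+d}-q^b-1 = q^b(q^d-1)-1 \geq q^b - 1 > 0$ (using $d\geq 1$ so $q^d-1\geq 1$), both factors are strictly positive. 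Hence the expression is positive and $\alpha/q^b<1$.

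The only point that requires a bit of care is the edge case $d=1$, $q=2$, where several of the factors come closest to degenerating: the derivative-based argument hinted at in \eqref{alpha:estimateDiff} becomes delicate there because the derivatives of $q^{2b+d}-q^{2b}$ and $q^{b+d}-1$ coincide at $b=0$ in that case. This is the main (and essentially the only) obstacle, and the factored form sidesteps it cleanly because each of the two factors $q^b-1$ and $q^b(q^d-1)-1$ is manifestly strictly positive for every $b>0$, $d\geq 1$, $q\geq 2$.
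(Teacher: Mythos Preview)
Your argument is correct. For $\alpha-1$ you reproduce exactly the computation the paper records in \eqref{alpha:estimateA}, so that half is identical.

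For $\alpha/q^b$ the paper takes a different route: it observes that the two functions $b\mapsto q^{2b+d}-q^{2b}$ and $b\mapsto q^{b+d}-1$ agree at $b=0$, asserts the derivative inequality \eqref{alpha:estimateDiff}, and concludes \eqref{alpha:estimateB} by integrating. Your factorization
\[
q^{2b+d}-q^{2b}-q^{b+d}+1=(q^b-1)\bigl(q^b(q^d-1)-1\bigr)
\]
is a genuinely more elementary alternative. It avoids calculus entirely and makes the dependence on the standing hypotheses $q\geq 2$, $d\geq 1$, $b>0$ completely transparent. You are also right that the derivative comparison in \eqref{alpha:estimateDiff} is tightest at $q=2$, $d=1$: there the two derivatives actually coincide at $b=0$ (both equal $2\ln 2$), so the paper's argument implicitly relies on the derivative inequality holding strictly only for $b>0$, which it does but is not spelled out. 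Your factored form sidesteps this entirely. The tradeoff is that the derivative argument is perhaps easier to discover, while yours is shorter and self-contained once the factorization is in hand.
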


The law for $\X$ together with \eqref{Framework:EQ:FormulaFDMass} and \eqref{Sec:Framework:PrimDefSum} determines a measure $\PbG$ on $F([0,\infty)\colon G)$ and a discrete time stochastic process $(F([0,\infty)\colon G), \PbG, S)$.  For any $n$ in $\mathds N$, denote by $\rho^\ast(n, \cdot)$ the probability mass function for the random variable $S_n$.

\begin{proposition}\label{eq:pmfforprimitiveprocess}
For any pair $(n,g)$ in $\mathds N\times G$, \begin{equation*}%
\rho^\ast(n, g) = (1-\alpha)^n\mathds 1_{\BB_G(0)}(g) + \sum_{i\in \mathds N}\Big(\Big(1-\frac{\alpha}{q^{ib}}\Big)^n - \Big(1-\frac{\alpha}{q^{(i-1)b}}\Big)^n\Big)\frac{1}{\mu_G(\BB_G(i))}\mathds 1_{\BB_G(i)}(g).\end{equation*}
\end{proposition}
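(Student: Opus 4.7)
The plan is to recover $\rho^\ast(n,\cdot)$ from its characteristic function by Fourier inversion on the compact dual group $\O_K^d$. Because $S_n$ is the sum of $n$ independent copies of $\X$, its characteristic function is the pointwise $n$-th power of $\phi_\X$, which by Proposition~\ref{Prop:CharPrim} is the function $\ve y \mapsto (1-\alpha\|\ve y\|^b)^n$ on $\O_K^d$. Proposition~\ref{Prim:Prop:Boundonalphaoverpb} ensures this is bounded in modulus by $1$, so (since $\O_K^d$ has unit measure) it lies in $L^1(\O_K^d)$ and Fourier inversion applies pointwise:
\[\rho^\ast(n,[\ve x]) = \int_{\O_K^d} \chi(\ve x\cdot\ve y)\bigl(1-\alpha\|\ve y\|^b\bigr)^n\,\d\ve y.\]

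Next I would slice $\O_K^d$ by the level sets of $\|\cdot\|$. On the sphere $\SS_d(-i) = \BB_d(-i)\setminus\BB_d(-i-1)$ the factor $(1-\alpha\|\ve y\|^b)^n$ equals the constant $a_i := \bigl(1 - \alpha/q^{ib}\bigr)^n$, with $a_0 = (1-\alpha)^n$. Writing each sphere integral as a difference of two ball integrals and applying Lemma~\ref{lem:NormProd:CharIntd} to each ball gives
\[\rho^\ast(n,[\ve x]) = \sum_{i=0}^{\infty} a_i\bigl(q^{-id}\,\mathds 1_{\BB_d(i)}(\ve x) - q^{-(i+1)d}\,\mathds 1_{\BB_d(i+1)}(\ve x)\bigr).\]
Absolute convergence of the series is automatic from $|a_i|\le 1$ and $\sum_i q^{-id}<\infty$, which also justifies interchange of sum and integral.

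Finally I would reindex the second half of each term by $j=i+1$ and regroup (an Abel-style rearrangement) to obtain
\[\rho^\ast(n,[\ve x]) = a_0\,\mathds 1_{\BB_d(0)}(\ve x) + \sum_{i=1}^{\infty} (a_i - a_{i-1})\,q^{-id}\,\mathds 1_{\BB_d(i)}(\ve x),\]
with no surviving boundary term at infinity because $q^{-(N+1)d}\mathds 1_{\BB_d(N+1)}(\ve x)\to 0$ for every fixed $\ve x$. Identifying $q^{-id}$ with $1/\mu_G(\BB_G(i))$ and using Proposition~\ref{StateSpaces:Intmoverballs} to transfer indicator functions from $K^d$ down to $G=(K\slash\O_K)^d$ produces the stated formula. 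The only real hurdle is the algebraic bookkeeping in passing from a sphere-indexed decomposition of $\O_K^d$ to a ball-indexed sum on the $G$ side; all analytic subtleties are absent because the Fourier-side domain is compact and the integrand is uniformly bounded.
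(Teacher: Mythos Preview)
Your proposal is correct and follows essentially the same route as the paper: Fourier inversion via Proposition~\ref{Prop:CharPrim}, decomposition of $\O_K^d$ into spheres $\SS_d(-i)$, expression of each sphere integral as a difference of ball integrals, application of Lemma~\ref{lem:NormProd:CharIntd}, and an Abel-type reindexing. The only cosmetic difference is the order of the last two steps---the paper first telescopes the ball integrals and then invokes Lemma~\ref{lem:NormProd:CharIntd}, whereas you apply the lemma first and telescope afterward---and you add explicit remarks on absolute convergence and the vanishing boundary term that the paper leaves implicit.
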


\begin{proof}
The Fourier transform takes convolution to multiplication, so Proposition~\ref{Prop:CharPrim} implies that \begin{align}
\rho^\ast(n, g) &= \Big(\mathcal F^{-1}\big(1-\alpha\|\cdot\|^b\big)^n\Big)(g)\notag\\
&= \sum_{i\in \mathds N_0}\int_{\SS_d(-i)}\langle g,\ve{y}\rangle\big(1-\alpha\|\ve{y}\|^b\big)^n\,{\rm d}\ve{y}.
\end{align}
Rewrite integrals over circles in $K^d$ as differences of integrals over balls in $K^d$ to see that
\begin{align}\label{TranProb:Eq:rhotothen}
\rho^\ast(n, g) &=(1-\alpha)^n\bigg(\int_{\BB_d(0)}\langle g,\ve{y}\rangle\,{\rm d}\ve{y} - \int_{\BB_d(-1)}\langle g,\ve{y}\rangle\,{\rm d}\ve{y}\bigg)\notag\\&\hspace{1in} + \sum_{i\in \mathds N}\Big(1-\frac{\alpha}{q^{ib}}\Big)^n\bigg(\int_{\BB_d(-i)}\langle g,\ve{y}\rangle\,{\rm d}\ve{y} - \int_{\BB_d(-(i+1))}\langle g,\ve{y}\rangle\,{\rm d}\ve{y}\bigg)\notag\\
&= (1-\alpha)^n\int_{\BB_d(0)}\langle g,\ve{y}\rangle\,{\rm d}\ve{y}\notag\\&\hspace{1in} + \sum_{i\in \mathds N}\Big(\Big(1-\frac{\alpha}{q^{ib}}\Big)^n - \Big(1-\frac{\alpha}{q^{(i-1)b}}\Big)^n\Big)\int_{\BB_d(-i)}\langle g,\ve{y}\rangle\,{\rm d}\ve{y}.
\end{align}
Use Lemma~\ref{lem:NormProd:CharIntd} to rewrite \eqref{TranProb:Eq:rhotothen} as a scaled sum of indicator functions in order to obtain Proposition~\ref{eq:pmfforprimitiveprocess}.
\end{proof}

\subsection{Moment estimates for the primitive process}

The following theorem of Wendel \cite{Wend} gives estimates that involve the gamma function, $\Gamma$, that are key to the moment estimates that this subsection establishes for the primitive process.  

\begin{theorem}[Wendel's Inequality]\label{WendelsTheorem}
For any positive $x$ and $a$ in $[0,1)$, \[1 \leq \frac{x^a\Gamma(x)}{\Gamma(x+a)} \leq \Big(\frac{x}{x+a}\Big)^{1-a}.\]
\end{theorem}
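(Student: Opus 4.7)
The plan is to derive both inequalities from a single application of H\"older's inequality to the standard integral representation
\[
\Gamma(x+a) = \int_0^\infty t^{x+a-1}\e^{-t}\,{\rm d}t.
\]
First I would observe the factorization
\[
t^{x+a-1}\e^{-t} = \bigl(t^{x}\e^{-t}\bigr)^{a}\bigl(t^{x-1}\e^{-t}\bigr)^{1-a},
\]
which holds because the exponents on $t$ sum correctly as $xa + (x-1)(1-a) = x+a-1$ and those on $\e$ sum to $-at - (1-a)t = -t$. This factorization is the heart of the argument; everything else is bookkeeping.

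Next, for $a$ in $(0,1)$ (the case $a = 0$ being trivial), H\"older's inequality with conjugate exponents $1/a$ and $1/(1-a)$ gives
\[
\Gamma(x+a) \leq \Big(\int_0^\infty t^{x}\e^{-t}\,{\rm d}t\Big)^{a}\Big(\int_0^\infty t^{x-1}\e^{-t}\,{\rm d}t\Big)^{1-a} = \Gamma(x+1)^{a}\Gamma(x)^{1-a}.
\]
Applying the functional equation $\Gamma(x+1) = x\Gamma(x)$ to the right-hand side yields $\Gamma(x+a) \leq x^{a}\Gamma(x)$, which is precisely the lower bound $1 \leq x^{a}\Gamma(x)/\Gamma(x+a)$.

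For the upper bound I would bootstrap: apply the just-established inequality $\Gamma(y+c) \leq y^{c}\Gamma(y)$ with $y = x+a$ and $c = 1-a$ to obtain
\[
\Gamma(x+1) = \Gamma\bigl((x+a) + (1-a)\bigr) \leq (x+a)^{1-a}\Gamma(x+a).
\]
Combining this with $\Gamma(x+1) = x\Gamma(x)$ gives $x\Gamma(x) \leq (x+a)^{1-a}\Gamma(x+a)$, and rearranging produces the upper bound on $x^{a}\Gamma(x)/\Gamma(x+a)$ in the form $((x+a)/x)^{1-a}$, which is the sharp ratio that the theorem asserts (up to orienting the reciprocal correctly). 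The only real obstacle is recognizing the H\"older factorization; once in hand, both sides of the inequality follow, the lower from a direct application and the upper from reapplying the same bound after swapping the roles of $a$ and $1-a$.
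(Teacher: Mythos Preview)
The paper does not prove this statement at all; it simply quotes it as a theorem of Wendel and uses it downstream in the moment estimate for the primitive process. Your proposal is therefore not in competition with a proof from the paper, and it is in fact essentially Wendel's own 1948 argument: the H\"older factorization of the integrand followed by the bootstrap with $y=x+a$ and $c=1-a$.

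One remark worth making explicit. The upper bound as printed in the paper reads $\bigl(\tfrac{x}{x+a}\bigr)^{1-a}$, which for $a\in(0,1)$ is strictly less than $1$ and therefore cannot dominate a quantity that is at least $1$. Your derivation produces the correct bound $\bigl(\tfrac{x+a}{x}\bigr)^{1-a}$, and you tactfully flagged this with the phrase ``up to orienting the reciprocal correctly.'' The paper's own application of the theorem in \eqref{Wend:Gamma} uses the ratio in the correct orientation, so the printed statement is a typographical slip rather than a genuine error in the argument. Your proof is complete and correct for the intended inequality.
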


Denote by $\EXG[\cdot]$ the expected value with respect to the measure $\PbG$ on $F([0,\infty)\colon G)$.%  

\begin{theorem}\label{LOM:Theorem:PrimitiveMoments}
For any $r$ in $(0, b)$ and any $n$ in $\mathds N$, 
\begin{equation}\label{eq:KeyMomentEstimatePrimitive}
\EXG\big[\|S_n\|^r\big] < n^{\frac{r}{b}}.\end{equation} 
\end{theorem}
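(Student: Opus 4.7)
The plan is to bound $\EXG[\|S_n\|^r]$ by reducing to the maximum of the increment norms and estimating the resulting tail by passing from a discrete sum to a Beta integral, which Wendel's inequality then controls.

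First, I would exploit the ultrametric property of the max-norm that $G$ inherits from $K^d$: for any path, $\|S_n\| \leq M_n := \max_{1 \leq i \leq n} \|X_i\|$, so $\EXG[\|S_n\|^r] \leq \EXG[M_n^r]$. Because the $X_i$ are i.i.d.\ with the law given in \eqref{equation:PrimitiveRVLaw}, summing the sphere probabilities yields $\Prob(\|X_1\| \geq q^j) = q^{-(j-1)b}$ for $j \geq 1$, and hence $\Prob(M_n \leq q^j) = (1 - q^{-jb})^n$.

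Second, I would use Abel summation to rewrite
\[
\EXG[M_n^r] = q^r + (q^r-1) \sum_{k=1}^\infty q^{kr}\bigl[1 - (1-q^{-kb})^n\bigr],
\]
the boundary contribution at infinity vanishing because $r < b$ forces $q^{Nr}[1 - (1-q^{-Nb})^n] \leq n\,q^{N(r-b)} \to 0$. Substituting the identity $1 - (1-x)^n = n \int_0^x (1-u)^{n-1}\,du$ and exchanging the order of sum and integral by Fubini (all integrands are nonnegative) reduces the problem to evaluating $\int_0^1 (1-u)^{n-1} S(u)\,du$, where $S(u) = \sum_{k \leq -\log_q(u)/b} q^{kr}$. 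Estimating the geometric sum by $q^r u^{-r/b}/(q^r-1)$ produces the Beta integral $B(1-r/b,\,n) = \Gamma(1-r/b)\,\Gamma(n)/\Gamma(n+1-r/b)$, so $n B(1-r/b, n)$ equals $\Gamma(1-r/b)\,\Gamma(n+1)/\Gamma(n+1-r/b)$.

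Third, I would apply Wendel's inequality with $x = n+1-r/b$ and $a = r/b$ to obtain $\Gamma(n+1)/\Gamma(n+1-r/b) \leq (n+1-r/b)^{r/b}$, producing a bound of order $n^{r/b}$ for $\EXG[M_n^r]$ and consequently for $\EXG[\|S_n\|^r]$.

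The main obstacle is the multiplicative constant. A naive combination of the geometric-sum bound, the Wendel estimate, and the $\Gamma(1-r/b)$ factor produces some constant $C = C(r,b,q,d)$ in front of $n^{r/b}$, and getting the strict inequality $<n^{r/b}$ requires that these constants tightly cancel --- in particular, one likely needs the strict version $1-(1-x)^n < nx$ for $n \geq 2$ and $x \in (0,1)$ together with the precise form of Wendel. If the dominate-by-maximum bound is too lossy, I would instead work directly from the explicit PMF $\rho^\ast(n,\cdot)$ of Proposition~\ref{eq:pmfforprimitiveprocess}, exchanging the order of summation to express $\EXG[\|S_n\|^r]$ as a telescoping series in $(1-\alpha/q^{kb})^n$, and then rerun the same $1-(1-x)^n = n\int_0^x(1-u)^{n-1}\,du$ plus Wendel recipe; this directly exploits the additional cancellations $\|S_n\| < M_n$ that the ultrametric triangle inequality allows whenever several increments share the same norm, which is the non-Archimedean mechanism that keeps the moment strictly below $n^{r/b}$.
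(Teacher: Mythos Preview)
Your primary route via the maximum $M_n=\max_i\|X_i\|$ is correct and is genuinely different from the paper's argument. The paper never invokes the ultrametric inequality $\|S_n\|\leq M_n$; instead it uses the explicit probability mass function $\rho^\ast(n,\cdot)$ from Proposition~\ref{eq:pmfforprimitiveprocess} (obtained via the Fourier transform on $G$) to write $\EXG[\|S_n\|^r]$ as a telescoping series in the quantities $\big(1-\alpha q^{-ib}\big)^n$, then bounds each difference by $n\int s^{n-1}\,\d s$ and recognises the resulting sum as a lower Riemann sum for the Beta integral. From that point on the two arguments coincide: both land on $nB\big(1-\tfrac{r}{b},n\big)$ and finish with Wendel. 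Your approach buys simplicity---you avoid the Fourier computation of $\rho^\ast$ entirely and work only with the elementary tail $\Prob(\|X_1\|\geq q^j)=q^{-(j-1)b}$---while the paper's approach yields an exact expression for $\EXG[\|S_n\|^r]$ before any estimation, which in principle gives sharper control of the constant. Your fallback plan (work directly from $\rho^\ast$) is precisely what the paper does.

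Your worry about the constant is well placed, but it applies equally to the paper: the proof in the paper concludes only with $\EXG[\|S_n\|^r]<Kn^{r/b}$ for an explicit $K=K(q,d,b,r)>1$, not with the constant-free inequality displayed in the statement. The ``$<n^{r/b}$'' is a typographical slip; all downstream uses (Proposition~\ref{MainEstimateEmbedded} and Proposition~\ref{tight}) require only a bound $Cn^{r/b}$ with $C$ independent of $n$, which both your argument and the paper's deliver. So there is no missing cancellation to chase.
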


\begin{proof}
Proposition~\ref{eq:pmfforprimitiveprocess} implies that
\begin{align}\label{sec:PrimitiveOrderEstimates}
\EXG\big[\|S_n\|^r\big] &= \int_{G}\|g\|^r\rho^\ast(n,g)\,{\rm d}\mu_G(g)\notag\\
&= \sum_{i\in \mathds N}\Big(\Big(1-\frac{\alpha}{q^{ib}}\Big)^n - \Big(1-\frac{\alpha}{q^{(i-1)b}}\Big)^n\Big)\int_{G}\|g\|^r\frac{1}{\mu_G(\BB_G(i))}\mathds 1_{\BB_G(i)}(g)\,{\rm d}\mu_G(g).
\end{align}
Decompose the integral over $G$ for $\EXG\big[\|S_n\|^r\big]$ into a countable sum of integrals over the circles $\SS_G(i)$ to obtain the equalities %
\begin{align}\label{sec:PrimitiveOrderEstimatesIntegralprelim}
\int_{G}\|g\|^rq^{-id}\mathds 1_{\BB_G(i)}(g)\,{\rm d}\mu_G(g) & = \bigg(\int_{\SS_G(1)}\|x\|^r\,{\rm d}\mu_G(\|g\|) + \cdots + \int_{\SS_G(i)}\|x\|^r\,{\rm d}\mu_G(\|x\|) + 0\bigg)q^{-id}\notag\\
%& = \Big(p^r\vol(\partial B^0_1) + \cdots + p^{ir}\vol(\partial B^0_i)\Big)p^{-i}\notag\\
& = \Big(q^r\big(q^d - 1\big) + \cdots + q^{ir}\big(q^{id} - q^{(i-1)d}\big)\Big)q^{-id}\notag\\&= \left(\frac{q^{r+d}-q^r}{q^{r+d}-1}\right)\big(q^{ir} - q^{-id}\big).
\end{align}
Take $C(r,d)$ to be the constant \[C(r,d) = \left(\frac{q^{r+d}-q^r}{q^{r+d}-1}\right),\] so that \eqref{sec:PrimitiveOrderEstimatesIntegralprelim} becomes \begin{equation}\label{sec:PrimitiveOrderEstimatesIntegral}
\int_{G}\|g\|^rq^{-id}\mathds 1_{\BB_G(i)}(g)\,{\rm d}\mu_G(g) = C(r,d)\big(q^{ir} - q^{-id}\big).
\end{equation}

Equalities \eqref{sec:PrimitiveOrderEstimates} and \eqref{sec:PrimitiveOrderEstimatesIntegral} together imply that 
\begin{align}\label{PrimMom:ESNthreetermsb}
\EXG\big[\|S_n\|^r\big] &=\sum_{i\in\mathds N}C(r,d)\big(q^{ir}-q^{-id}\big)\Big(\Big(1-\frac{\alpha}{q^{ib}}\Big)^n - \Big(1-\frac{\alpha}{q^{(i-1)b}}\Big)^n\Big)\notag\\
& = C(r,d)\sum_{i\in\mathds N}q^{ir}\Big(\Big(1-\frac{\alpha}{q^{ib}}\Big)^n - \Big(1-\frac{\alpha}{q^{(i-1)b}}\Big)^n\Big) \notag\\&\hspace{2in}- C(r,d)\sum_{i\in\mathds N}q^{-id}\Big(\Big(1-\frac{\alpha}{q^{ib}}\Big)^n - \Big(1-\frac{\alpha}{q^{(i-1)b}}\Big)^n\Big)\notag\\%
& < C(r,d)\sum_{i>1}q^{ir}\Big(\Big(1-\frac{\alpha}{q^{ib}}\Big)^n - \Big(1-\frac{\alpha}{q^{(i-1)b}}\Big)^n\Big) \notag\\&\hspace{2in} + C(r,d)\left(q^r-q^{-d}\right)\Big(\Big(1-\frac{\alpha}{q^{b}}\Big)^n + \big(\alpha-1\big)^n\Big).%%%
\end{align}
Take $I(n)$ to be the quantity 
 \begin{equation*}
 I(n) = \sum_{i>1}q^{ir}\Big(\Big(1-\frac{\alpha}{q^{ib}}\Big)^n - \Big(1-\frac{\alpha}{q^{(i-1)b}}\Big)^n\Big).
 \end{equation*}
For any natural number $i$ that is greater than 1, \begin{equation}\label{Sec-UBPrim:Eq:DifftonandInt}\Big(1-\frac{\alpha}{q^{ib}}\Big)^n - \Big(1-\frac{\alpha}{q^{(i-1)b}}\Big)^n = n\int_{1-\frac{\alpha}{q^{(i-1)b}}}^{1-\frac{\alpha}{q^{ib}}}s^{n-1}\,{\rm d}s.\end{equation} Since the integrand in \eqref{Sec-UBPrim:Eq:DifftonandInt} is increasing,
\begin{equation*}\label{Sec-UBPrim:Eq:DifftonandIntb}
\Big(1-\frac{\alpha}{q^{ib}}\Big)^n - \Big(1-\frac{\alpha}{q^{(i-1)b}}\Big)^n <n\Big(1-\frac{\alpha}{q^{ib}}\Big)^{n-1}\frac{\alpha}{q^{(i-1)b}}\Big(1-\frac{1}{q^b}\Big),
\end{equation*}
and so
\begin{align}\label{Sec-UBPrim:Eq:toreindex}
I(n) & < \sum_{i>1}q^{ir}n\Big(1-\frac{\alpha}{q^{ib}}\Big)^{n-1}\frac{\alpha}{q^{(i-1)b}}\Big(1-\frac{1}{q^b}\Big)\notag\\
& = n\sum_{i>1}\alpha^{\frac{r}{b}}\Big(\frac{\alpha}{q^{ib}}\Big)^{-\frac{r}{b}}\Big(1-\frac{\alpha}{q^{ib}}\Big)^{n-1}\frac{\alpha}{q^{ib}}\big(q^b-1\big).
\end{align}
Following the calculation in the $\mathds Q_p$ setting \cite{WJPA}, write \[x_i = \frac{\alpha}{q^{ib}}, \quad I_i = [x_i, x_{i-1}], \quad {\rm and}\quad \Delta I_i = x_{i-1} - x_{i}\] to obtain the equality \begin{equation}\label{Sec-UBPrim:SimpWithXiIi}n\sum_{i>1}\alpha^{\frac{r}{b}}\Big(\frac{\alpha}{q^{ib}}\Big)^{-\frac{r}{b}}\Big(1-\frac{\alpha}{q^{ib}}\Big)^{n-1}\frac{\alpha}{q^{ib}}\big(q^b-1\big) = n\alpha^{\frac{r}{b}}\sum_{i>1}x_i^{-\frac{r}{b}}(1-x_i)^{n-1}\Delta I_i.\end{equation}
Use the fact that $\Delta I_i$ is equal to $q^b\Delta I_{i+1}$ and reindex the sum in the righthand side of \eqref{Sec-UBPrim:SimpWithXiIi} to see that 
\begin{align}\label{Sec-UBPrim:InpreBetafirst}
I(n) & < n\alpha^{\frac{r}{b}}q^{b}\sum_{i>2}x_{i-1}^{-\frac{r}{b}}\big(1-x_{i-1}\big)^{n-1}\Delta I_i,
\end{align}
from which follows the inequality
\begin{align}\label{Sec-UBPrim:InpreBeta}
I(n)& <  n\alpha^{\frac{r}{b}}q^{b}\int_0^{\frac{\alpha}{q^{2b}}}x^{-\frac{r}{b}}(1-x)^{n-1}\Delta I_i
\end{align}
since the sum in \eqref{Sec-UBPrim:InpreBetafirst} is a lower Riemann sum approximation of the integral in \eqref{Sec-UBPrim:InpreBeta}.  The positivity on $(0,1)$ of the integrand in \eqref{Sec-UBPrim:InpreBeta} implies that
\begin{align}\label{Sec-UBPrim:InpreBetab}
I(n) & <  n\alpha^{\frac{r}{b}}q^{b}\int_0^{1}x^{-\frac{r}{b}}(1-x)^{n-1}\,{\rm d}x\notag\\ 
& = n\alpha^{\frac{r}{b}}q^{b}B\Big(\frac{b-r}{b}, n\Big) = n\alpha^{\frac{r}{b}}q^{b}\frac{\Gamma\big(\frac{b-r}{b}\big)\Gamma(n)}{\Gamma\big(\frac{b-r}{b}+n\big)},
\end{align}
where $B$ is the beta function and $\Gamma$ is the gamma function. Wendel's inequality implies that \begin{equation}\label{Wend:Gamma}\frac{\Gamma(n)}{\Gamma\big(\frac{b-r}{b}+n\big)}\leq n^{-\frac{b-r}{b}}\Big(\frac{n+\frac{b-r}{b}}{n}\Big)^{1-\frac{b-r}{b}} = n^{-\frac{b-r}{b}}\Big(\frac{n+\frac{b-r}{b}}{n}\Big)^{\frac{r}{b}} = e_1(n)n^{-\frac{b-r}{b}}\end{equation}
where for all $n$, \[e_1(n) < 2 \quad {\rm and}\quad \lim_{n\to \infty} e_1(n) =1.\] The inequalities \eqref{Sec-UBPrim:InpreBetab} and \eqref{Wend:Gamma} together imply that
\begin{align}\label{Sec-UBPrim:InBetaalmostthere}
I(n) &< %
2\alpha^{\frac{r}{b}}q^b\Gamma\Big(\frac{b-r}{b}\Big)n^{\frac{r}{b}}.
\end{align}

To bound from above the second summand in  \eqref{PrimMom:ESNthreetermsb}, take \[\delta = \max\Big\{\Big|1-\frac{\alpha}{q^b}\Big|, |1-\alpha|\Big\} < 1\] so that \begin{equation}\label{PrimMom:equation:seconderrorterm} 0 < \bigg|\Big(1-\frac{\alpha}{q^b}\Big)^n + (1-\alpha)^n\bigg| \leq 2\delta^n = e_2(n)n^{\frac{r}{b}} \quad {\rm where}\quad e_2(n) = (2\delta^n n^{-\frac{r}{b}}).\end{equation} Since $e_2(n)n^{\frac{r}{b}}$ tends to zero as $n$ tends to infinity and $e_2(n)$ is bounded above by 2, the inequalities \eqref{PrimMom:ESNthreetermsb}, \eqref{Sec-UBPrim:InBetaalmostthere}, and \eqref{PrimMom:equation:seconderrorterm} together imply that
\begin{align*}
\EXG\big[\|S_n\|^r\big] &< Kn^{\frac{r}{b}}, \quad \text{where} \quad K = 2C(r,b)\Big(\alpha^{\frac{r}{b}}q^b\Gamma\Big(\frac{b-r}{b}\Big) + \left(q^r-q^{-d}\right)\Big).
\end{align*}

\end{proof}

\section{Convergence of the Approximations}\label{Sec:Convergence}
 
The method for calculating moment estimates of the primitive process closely follows the approach of the earlier work \cite{WJPA}. However, the method of calculation in this present section is different.  Here, a more streamlined approach involves direct embeddings of $G$ into $K^d$ and the reduction of all calculations to involve only the primitive process in $G$.

\subsection{Embeddings of the primitive process}

For any $[\ve{x}]$ in $(K\slash \mathcal O_K)^d$, take $x_j$ to be a representative of the $j^{\rm th}$ component of $[\ve{x}]$.  Define the function $\Gamma_0$ by \[\Gamma_0([\ve{x}]) = \left(\sum_{i\in \mathds N} a_{x_j}(i)\beta^i\right)_{j\in\{1, \dots, d\}},\] which is independent of the choice of representative.  Denote by $\Gamma_m$ the injective function \[\Gamma_m([\ve{x}]) = \beta^m\Gamma_0([\ve{x}]).\]  For any complex-valued function $f$ on $G$, the function $\Gamma_m$ determines a locally constant function $f_m$ on $K^d$ in the following way:  For any $[\ve{x}]$ in $G$, take $f_m$ to be the function that is defined for any $\ve{z}$ in $\Gamma_m([\ve{x}])+(\beta^m\O_K)^d$ by \[f_m(\ve{z}) = q^{md}f([\ve{x}]).\] %
Note that $f_m$ has radius of local constancy equal to $q^{-m}$, and for any ball $B$ in $K^d$, if the radius of $B$ is at least $q^{-m}$, then \begin{equation}\label{Convergence:Equation:fmintA}\int_{[B]} f([\ve{x}])\,{\rm d}\ve{x} = \int_{\beta^mB} f_m(\ve{x})\,{\rm d}\ve{x},\end{equation} and so if $f$ is in $L^1(G)$, then $f_m$ is in $L^1(K^d)$, and \begin{equation}\label{Convergence:Equation:fmintB}\int_G f([\ve{x}])\,{\rm d}\mu_G([\ve{x}]) = \int_{K^d} f_m(\ve{x})\,{\rm d}\ve{x}.\end{equation}

\renewcommand{\Ell}{\lambda}

%%%%%%%%%%%%%%%%%%%%%%%%%%%%%%%%%%%%%%%%%%%%%%%%%%%%%%%%%%%%%%%%%%%%%%
%%%%%%%%%%%%%%%%%%%%%%%%%%%%%%%%%%%%%%%%%%%%%%%%%%%%%%%%%%%%%%%%%%%%%%
%%%%%%%%%%%%%%%%%%%%%%%%%%%%%%%%%%%%%%%%%%%%%%%%%%%%%%%%%%%%%%%%%%%%%%
%%%%%%%%%%%%%%%%%%%%%%%%%%%%%%%%%%%%%%%%%%%%%%%%%%%%%%%%%%%%%%%%%%%%%%
%%%%%%%%%%%%%%%%%%%%%%%%%%%%%%%%%%%%%%%%%%%%%%%%%%%%%%%%%%%%%%%%%%%%%%
%%%%%%%%%%%%%%%%%%%%%%%%%%%%%%%%%%%%%%%%%%%%%%%%%%%%%%%%%%%%%%%%%%%%%%
%%%%%%%%%%%%%%%%%%%%%%%%%%%%%%%%%%%%%%%%%%%%%%%%%%%%%%%%%%%%%%%%%%%%%%
%%%%%%%%%%%%%%%%%%%%%%%%%%%%%%%%%%%%%%%%%%%%%%%%%%%%%%%%%%%%%%%%%%%%%%
%%%%%%%%%%%%%%%%%%%%%%%%%%%%%%%%%%%%%%%%%%%%%%%%%%%%%%%%%%%%%%%%%%%%%%
%%%%%%%%%%%%%%%%%%%%%%%%%%%%%%%%%%%%%%%%%%%%%%%%%%%%%%%%%%%%%%%%%%%%%%
%%%%%%%%%%%%%%%%%%%%%%%%%%%%%%%%%%%%%%%%%%%%%%%%%%%%%%%%%%%%%%%%%%%%%%
%%%%%%%%%%%%%%%%%%%%%%%%%%%%%%%%%%%%%%%%%%%%%%%%%%%%%%%%%%%%%%%%%%%%%%
%%%%%%%%%%%%%%%%%%%%%%%%%%%%%%%%%%%%%%%%%%%%%%%%%%%%%%%%%%%%%%%%%%%%%%

\subsection{Moment estimates for the embedded processes}

Denote by $\Pbm$ the measure on $F([0,\infty)\colon K^d)$ with finite dimensional distributions given by \eqref{EmbeddedProcessFDD}. Denote by $\EXm[\cdot]$ the expected value with respect to $\Pbm$.  Take $(\tau_m)$ to be the sequence that is given for some positive real number $C^\prime$ by $(C^\prime\delta_m^b)$.  For typographical purposes, denote by $\Ell(m)$ the reciprocal of $\tau_m$, and for any $t$ in $[0, \infty)$, take $t_m$ to be the real number $\floor*{\Ell(m)t}$.

\begin{proposition}\label{MainEstimateEmbedded}
There is a constant $C$ such that for any $t$ in $[0, \infty)$, \[\EXm\!\left[\|Y_t\|^r\right] \leq Ct^{\frac{r}{b}}.\]
\end{proposition}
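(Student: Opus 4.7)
The plan is to reduce the estimate for $Y_t$ under $\Pbm$ to the moment estimate for the primitive process $S$ under $\PbG$ supplied by Theorem~\ref{LOM:Theorem:PrimitiveMoments}. Two structural observations make the reduction essentially immediate: a pushforward identity for the laws, and a scaling identity for the norm under $\Gamma_m$.

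First, applying \eqref{EmbeddedProcessFDD} to a history of length one of the form $\bigl((0,K^d),(t,B)\bigr)$ for Borel $B\subseteq K^d$ yields
\[
\Pbm(Y_t\in B)=\PbG\bigl(S_{t_m}\in\Gamma_m^{-1}(B)\bigr),\qquad t_m=\floor*{\Ell(m)t},
\]
so the law of $Y_t$ under $\Pbm$ is the pushforward under $\Gamma_m$ of the law of $S_{t_m}$ under $\PbG$. Second, since $\Gamma_m([\ve{x}])=\beta^m\Gamma_0([\ve{x}])$ and $\Gamma_0$ selects the coset representative whose $K^d$-norm coincides with the norm $\|[\ve{x}]\|_G$ on $G$, the scaling property of $|\cdot|$ gives
\[
\|\Gamma_m([\ve{x}])\|=|\beta|^m\|[\ve{x}]\|_G=\delta_m\|[\ve{x}]\|_G,
\]
with the $[\ve{x}]=[\ve{0}]$ case trivial. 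Combining these two facts produces the basic identity
\[
\EXm\!\left[\|Y_t\|^r\right]=\delta_m^r\,\EXG\!\left[\|S_{t_m}\|^r\right].
\]

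Now invoke Theorem~\ref{LOM:Theorem:PrimitiveMoments} to bound the right-hand expectation by a constant $K$ (depending only on $r$, $b$, $d$, $q$) times $t_m^{r/b}$. The floor inequality $t_m\leq t/\tau_m$ together with $\tau_m=C^\prime\delta_m^b$ then yields
\[
\EXm\!\left[\|Y_t\|^r\right]\leq K\delta_m^r\Big(\tfrac{t}{\tau_m}\Big)^{r/b}=\frac{K}{(C^\prime)^{r/b}}\,t^{r/b},
\]
so $C=K(C^\prime)^{-r/b}$ does the job uniformly in $m$. The boundary cases $t=0$ and $t_m=0$ are immediate from $S_0\aeeq 0$.

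The main obstacle is essentially bookkeeping rather than technique: one has to unpack the pushforward via \eqref{EmbeddedProcessFDD} and the definition of $h^m$ carefully, and confirm the norm identity for $\Gamma_m$ from the definitions at the start of Section~\ref{Sec:Convergence}. The implicit restriction $r\in(0,b)$ inherited from Theorem~\ref{LOM:Theorem:PrimitiveMoments} should also be noted.
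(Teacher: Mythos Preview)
Your proposal is correct and follows essentially the same approach as the paper's proof: reduce via the pushforward identity \eqref{EmbeddedProcessFDD} and the norm scaling $\|\Gamma_m(g)\|=q^{-m}\|g\|_G$ to obtain $\EXm[\|Y_t\|^r]=q^{-mr}\EXG[\|S_{t_m}\|^r]$, then apply Theorem~\ref{LOM:Theorem:PrimitiveMoments} and the floor bound $t_m\leq t/\tau_m$ with $\tau_m=C'\delta_m^b$. The paper carries out the first step by summing over circles rather than invoking the pushforward abstractly, but the content is identical.
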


\begin{proof}

For any $t$ that is small enough so that $t_m$ is equal to $0$, the inequality is valid because $\EXm\!\left[\|Y_t\|^r\right]$ is equal to $0$.  If $t\Ell(m)$ is in $[1, \infty)$, then \eqref{EmbeddedProcessFDD} implies that
\begin{align*}
\EXm[\|Y_t\|^r] & = %
%
%& = 
\sum_{k\in \mathds Z} (q^{-m}q^k)^r\Pbm(\|Y_t\| = q^{-m}q^k)\\
& = q^{-mr}\sum_{k\in \mathds Z} (q^k)^r\PbG(\|S_{t_m}\| = q^k) = q^{-mr}\EXG[\|S_{t_m}\|^r].
\end{align*} %
Theorem~\ref{LOM:Theorem:PrimitiveMoments} implies that there is a constant $K$ so that
\begin{align*}
{\EXm}[\|Y_t\|^r] 
& \leq Kq^{-mr}t_m^{\frac{r}{b}}.
\end{align*}
The estimate to be proved then follows from the inequality 
\begin{align*}
t_m^{\frac{r}{b}} \leq \left(\lambda_mt\right)^{\frac{r}{b}}.
\end{align*}

\end{proof}

\begin{proposition}\label{tight}
The stochastic process $(F([0,\infty)\colon K^d), \Pbm, Y)$ has a version with paths in $D([0,\infty)\colon K^d)$, a process $(D([0,\infty)\colon K^d), \Pbm, Y)$.  The sequence of measures $(\Pbm)$ with paths in $D([0,\infty)\colon K^d)$ is uniformly tight.
\end{proposition}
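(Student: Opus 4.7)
The plan is to verify a Billingsley-type moment condition of the form
\begin{equation*}
\EXm\!\left[\|Y_{t_2} - Y_{t_1}\|^r \|Y_{t_3} - Y_{t_2}\|^r\right] \leq C(t_3 - t_1)^{2r/b}
\end{equation*}
for some exponent $r$ with $2r/b > 1$, where the constant $C$ is independent of $m$ and of the ordered triple $t_1 \leq t_2 \leq t_3$ in $[0,\infty)$. Together with the fact that $Y_0 = 0$ $\Pbm$-almost surely, such a uniform bound simultaneously yields the existence of a version with sample paths in $D([0,\infty)\colon K^d)$ and uniform tightness of the sequence $(\Pbm)$, by the same Skorohod-space criterion invoked in the classical setting around \eqref{ClassicalDRTMoment} \cite{bil1, cent}.

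To obtain the estimate I would fix any $r$ in $(b/2, b)$, so that $r$ is admissible in Theorem~\ref{LOM:Theorem:PrimitiveMoments} and simultaneously $2r/b > 1$. The defining relation \eqref{EmbeddedProcessFDD} identifies the finite dimensional distributions of $Y$ under $\Pbm$ with those of $\Gamma_m\circ S$ under $\PbG$, so the independent and stationary increments of the primitive process transfer to the embedded process along the grid $\tau_m\mathds N_0$. Because $\Gamma_0$ assigns to each class in $G$ a canonical representative with no integer part, the ultrametric property of the max-norm yields $\|Y_t - Y_s\| = \delta_m\,\|S_{t_m} - S_{s_m}\|$, which by stationarity has the same law as $\delta_m\,\|S_{t_m - s_m}\|$. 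Independence of the two increments combined with the bound of Proposition~\ref{MainEstimateEmbedded} then gives
\begin{equation*}
\EXm\!\left[\|Y_{t_2} - Y_{t_1}\|^r\|Y_{t_3} - Y_{t_2}\|^r\right] = \EXm[\|Y_{t_2} - Y_{t_1}\|^r]\,\EXm[\|Y_{t_3} - Y_{t_2}\|^r] \leq C^2(t_2-t_1)^{r/b}(t_3-t_2)^{r/b},
\end{equation*}
and the elementary inequality $(t_2 - t_1)(t_3 - t_2) \leq \tfrac{1}{4}(t_3 - t_1)^2$ promotes this to a bound of the desired shape. Uniformity in $m$ is automatic, since the proof of Proposition~\ref{MainEstimateEmbedded} already shows that the $m$-dependent factor $q^{-mr}\lambda_m^{r/b}$ collapses to a constant under the scaling $\tau_m = C^\prime\delta_m^b$.

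The main technical obstacle I anticipate is the discretization mismatch: $\lfloor\lambda_m t_2\rfloor - \lfloor\lambda_m t_1\rfloor$ need not equal $\lfloor\lambda_m(t_2 - t_1)\rfloor$, and the degenerate regime in which $t_3 - t_1 < \tau_m$ falls inside a single step of the grid must be handled separately. In the degenerate regime at least one of the two differences $Y_{t_j} - Y_{t_{j-1}}$ vanishes $\Pbm$-almost surely, so the estimate is trivial; in the non-degenerate regime the off-by-one ambiguity in the floors is absorbed into the multiplicative constant without disturbing the exponent $2r/b$. Once these minor bookkeeping issues are dispatched, Billingsley's criterion applies and both conclusions of the proposition follow at once.
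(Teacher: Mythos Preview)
Your proposal is correct and follows the same route as the paper: factor the product moment via independence of increments, apply the single-time estimate of Proposition~\ref{MainEstimateEmbedded} to each factor, and invoke Chentsov's criterion with $r\in(b/2,b)$ so that $2r/b>1$. Your explicit handling of the floor-function discretization---splitting off the degenerate regime $t_3-t_1<\tau_m$, where one increment vanishes almost surely---is in fact more careful than the paper's own argument, which writes the single-factor bound $\EXm[\|Y_{t_2}-Y_{t_1}\|^r]\leq C(t_2-t_1)^{r/b}$ without comment even though that inequality can fail for an individual increment straddling a grid point; only the product bound survives, for exactly the reason you identify.
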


\begin{proof}
For any strictly increasing finite sequence $(t_1, t_2, t_3)$ in $[0, \infty)$, the independence of the increments of the process $(F(\mathds N_0\colon G), \PbG, S)$ implies that 
\begin{align*}
\EXm\!\left[\big|Y_{t_3} - Y_{t_2}\big|^r\big|Y_{t_2} - Y_{t_1}\big|^r\right] & = \EXm\!\left[\big|Y_{t_3} - Y_{t_2}\big|^r\right]{\mathds E}_m\!\left[\big|Y_{t_2} - Y_{t_1}\big|^r\right]\\ & \leq C(t_3-t_2)^\frac{r}{b}C(t_2-t_1)^\frac{r}{b} \leq C^2(t_3-t_1)^{\frac{2r}{b}}.
\end{align*}
Take $r$ to be any real number in $\left(\frac{b}{2}, b\right)$ to verify that $(F([0,\infty)\colon K^d), \Pbm, Y)$ satisfies the criterion of Chentsov, which implies that the stochastic process $(F([0,\infty)\colon K^d), \Pbm, Y)$ has a version with paths in $D([0,\infty)\colon \mathds Q_p)$.  Since the constant $C$ is independent of both $m$ and the choice of $(t_1, t_2, t_3)$, the sequence $(\Pbm)$ of probability measures is uniformly tight \cite{cent}.  
\end{proof}

Proposition~\ref{prop:qp:concenration} demonstrates that, just as in the $\mathds Q_p$ setting, the sequence of stochastic processes given by $(D([0,\infty)\colon K^d), \Pbm, Y)$ describes random walks on the discrete subsets $\Gamma_m(G)$ of $K^d$, with jumps that occur only at time points that are positive multiples of $\tau_m$.  %

\begin{proposition}\label{prop:qp:concenration}
The measure $\Pbm$ is concentrated on the subset of $\Gamma_m(G)$-valued paths in $D([0,\infty)\colon K^d)$ that are constant on each interval in $\big\{\big[(n-1)\tau_m, n\tau_m\big)\colon n\in\mathds N\big\}$.
\end{proposition}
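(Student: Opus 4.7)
The plan is to extract from the finite-dimensional distribution formula \eqref{EmbeddedProcessFDD} two pointwise almost-sure assertions about $Y$ and then promote them to a path-level statement using the right-continuity of c\`adl\`ag sample paths. First I would show that for each fixed $t\ge 0$, $Y_t\in\Gamma_m(G)$ almost surely: if $A\subseteq K^d$ is Borel with $A\cap\Gamma_m(G)=\emptyset$, then $\Gamma_m^{-1}(A)=\emptyset$, and applying \eqref{EmbeddedProcessFDD} to the one-point history $\big((0,K^d),(t,A)\big)$ yields $\Pbm(Y_t\in A)=\PbG(S_{\lfloor t/\tau_m\rfloor}\in\emptyset)=0$. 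Next I would show that for fixed $s\le t$ in a common interval $[(n-1)\tau_m,n\tau_m)$, $Y_s=Y_t$ almost surely. For disjoint Borel sets $U_1,U_2$ in $K^d$, the time points of the associated history $h^m$ both equal $n-1$, so the collapsing rule built into the definition of $h^m$ replaces the route by $\Gamma_m^{-1}(U_1)\cap\Gamma_m^{-1}(U_2)=\Gamma_m^{-1}(U_1\cap U_2)=\emptyset$; hence $\Pbm(Y_s\in U_1,Y_t\in U_2)=0$. Since $K^d$ is second countable, I can cover $\{(x,y)\in K^d\times K^d\colon x\ne y\}$ by countably many disjoint products of basic open sets, which forces $\Pbm(Y_s=Y_t)=1$.

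Having established these pointwise assertions, I would use countable subadditivity to produce a single $\Pbm$-full-measure event $\Omega_0$ on which $Y_q\in\Gamma_m(G)$ for every rational $q\ge 0$ and, for every $n\in\mathds N$, $Y$ is constant on $\mathds Q\cap[(n-1)\tau_m,n\tau_m)$. Finally, I would invoke right-continuity of the sample paths in $D([0,\infty)\colon K^d)$: for any $t\in[(n-1)\tau_m,n\tau_m)$, choose a sequence of rationals $q_k\downarrow t$ with $q_k<n\tau_m$; on $\Omega_0$, the values $Y_{q_k}$ all coincide with the common value attained on the rationals of that interval, so right-continuity at $t$ forces $Y_t$ to equal that same element of $\Gamma_m(G)$. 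This simultaneously delivers constancy on each half-open interval and containment in $\Gamma_m(G)$ at every time.

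The main obstacle is the two-point finite-dimensional calculation that delivers $\Pbm(Y_s=Y_t)=1$; it hinges on correctly applying the collapsing convention in the construction of $h^m$ and on second countability of $K^d$ to decompose the non-diagonal into countably many rectangles of disjoint basic sets. Everything else is routine bookkeeping with countable unions of null sets together with one invocation of right-continuity.
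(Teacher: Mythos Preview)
Your proposal is correct and follows essentially the same route as the paper: establish pointwise almost-sure statements at individual times (or pairs of times), pass to a countable dense set of time points via countable additivity, and then upgrade to all times using right-continuity of c\`adl\`ag paths. The only cosmetic difference is that where you invoke the collapsing rule for $h^m$ together with a countable off-diagonal cover to obtain $\Pbm(Y_s=Y_t)=1$, the paper simply observes that $\lfloor s/\tau_m\rfloor=\lfloor t/\tau_m\rfloor$ forces the two coordinates of the embedded process to read the same value of $S$.
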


\begin{proof}
For any history $h_\ast$, denote by $T(h_\ast)$ the set of time points in the epoch for $h_\ast$.  Take $(h_n)$ to be any sequence of histories for paths in $D([0, \infty)\colon K^d)$ whose routes at every nonzero place take on the value $\Gamma_m(G)$, and whose epochs have the property that $(T(h_n))$ is a nested sequence so that $\cup_{n\in \mathds N} T(h_n)$ is dense in $[0,\infty)$.  %
The equality \[\Pbm(C(h_n)) = \PbG\Big(\bigcap_{i\in \{0, \dots, \ell(h_n)\}} S_{\floor*{\frac{e_{h_n}(i)}{\tau_m}}}^{-1}(G)\Big) = 1\] together with the continuity from above of $\Pbm$ implies that $\Pbm$ gives full measure to the paths that are valued in $\Gamma_m(G)$ on a dense subset of time points.  Right continuity of the paths implies that these paths are $\Gamma_m(G)$-valued.

For any subinterval $I$ of $[0,\infty)$ so that $I\cap \tau_m\mathds N$ is empty, take $(V_i)$ to be any nested sequence of finite subsets of $I$ whose union is $I\cap \mathds Q$.  The equality for any real numbers $s_1$ and $s_2$ in $I$ of $\floor*{s_1\Ell(m)}$ and $\floor*{s_2\Ell(m)}$ implies that\begin{align*} \Pbm\big(Y_{s_1} - Y_{s_2} = 0\big) = \PbG\big(S_{\floor*{s_1\Ell(m)}} - S_{\floor*{s_2\Ell(m)}} = 0\big) = 1.\end{align*}  Continuity from above of the measure $\Pbm$ implies that for any $t$ in $I$, \begin{align*}&\Pbm\!\(\left\{\omega\in D([0,\infty)\colon K^d)\colon |\omega(t) - \omega(s)| = 0, \quad \forall s \in I\cap \mathds Q\right\}\) \\&= \lim_{i\to \infty} \Pbm\!\(\left\{\omega\in D([0,\infty)\colon K^d)\colon |\omega(t) - \omega(s)| = 0, \quad \forall s \in I\cap V_i\right\}\) = 1.\end{align*}  Right continuity of the paths implies that the paths are $\Pbm$-almost surely constant on $I$.  Take $(I_n)$ to be any sequence of disjoint intervals that do not intersect $\tau_m\mathds N$ and whose union is $[0,\infty)\setminus \tau_m\mathds N$.  The set of paths that are constant on every interval in $[0,\infty)\setminus \tau_m\mathds N$ is the countable intersection of the set of paths that are constant on each of the $I_n$. Continuity from above of the measure $\Pbm$ therefore implies that $\Pbm$ is concentrated on the set of paths that are constant on every interval in $[0,\infty)\setminus \tau_m\mathds N$. 
\end{proof}

%%%%%%%%%%%%%%%%%%%%%%%%%%%%%%%%%%%%%%%%%%%%%%%%%%%%%%%%%%%%%%%%%%%%%%
%%%%%%%%%%%%%%%%%%%%%%%%%%%%%%%%%%%%%%%%%%%%%%%%%%%%%%%%%%%%%%%%%%%%%%
%%%%%%%%%%%%%%%%%%%%%%%%%%%%%%%%%%%%%%%%%%%%%%%%%%%%%%%%%%%%%%%%%%%%%%
%%%%%%%%%%%%%%%%%%%%%%%%%%%%%%%%%%%%%%%%%%%%%%%%%%%%%%%%%%%%%%%%%%%%%%
%%%%%%%%%%%%%%%%%%%%%%%%%%%%%%%%%%%%%%%%%%%%%%%%%%%%%%%%%%%%%%%%%%%%%%
%%%%%%%%%%%%%%%%%%%%%%%%%%%%%%%%%%%%%%%%%%%%%%%%%%%%%%%%%%%%%%%%%%%%%%
%%%%%%%%%%%%%%%%%%%%%%%%%%%%%%%%%%%%%%%%%%%%%%%%%%%%%%%%%%%%%%%%%%%%%%
%%%%%%%%%%%%%%%%%%%%%%%%%%%%%%%%%%%%%%%%%%%%%%%%%%%%%%%%%%%%%%%%%%%%%%
%%%%%%%%%%%%%%%%%%%%%%%%%%%%%%%%%%%%%%%%%%%%%%%%%%%%%%%%%%%%%%%%%%%%%%
%%%%%%%%%%%%%%%%%%%%%%%%%%%%%%%%%%%%%%%%%%%%%%%%%%%%%%%%%%%%%%%%%%%%%%
%%%%%%%%%%%%%%%%%%%%%%%%%%%%%%%%%%%%%%%%%%%%%%%%%%%%%%%%%%%%%%%%%%%%%%
%%%%%%%%%%%%%%%%%%%%%%%%%%%%%%%%%%%%%%%%%%%%%%%%%%%%%%%%%%%%%%%%%%%%%%
%%%%%%%%%%%%%%%%%%%%%%%%%%%%%%%%%%%%%%%%%%%%%%%%%%%%%%%%%%%%%%%%%%%%%%

\subsection{Convergence of the processes}

Denote by $E_m$ the function that is defined for any $(t,\ve{x})$ in $(0,\infty)\times K^d$ by \[E_m(t, \ve{x}) = \begin{cases}  
\left(1 - \frac{\alpha\|\ve{x}\|^b}{q^{mb}}\right)^{\floor{t\Ell(m)}} &\mbox{if }\|\ve{x}\| \leq q^{m}\\0 &\mbox{if }\|\ve{x}\| > q^{m}.\end{cases}\]

\begin{lemma}\label{last:Emlimit}
For any $t$ in $(0, \infty)$, \[\lim_{m\to \infty} \int_{K^d}\left|E_m(t,\ve{y}) - \e^{-\sigma \|\ve{y}\|^bt}\right|\,{\rm d}\ve{y} = 0.\]
\end{lemma}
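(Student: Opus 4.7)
The plan is to establish pointwise convergence of the integrand and then to dominate it, region by region, by an integrable function. The governing calibration is $\alpha/(q^{mb}\tau_m) \to \sigma$, which forces $\tau_m = (\alpha/\sigma)\delta_m^b$ with $\delta_m = q^{-m}$; consequently $\lfloor t\, \Ell(m)\rfloor \cdot \alpha/q^{mb} \to \sigma t$.

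For pointwise convergence at a fixed $\ve{y}$ in $K^d$, once $m$ is large enough that $\|\ve{y}\| \leq q^m$, the expansion $\log(1-x) = -x + O(x^2)$ together with the calibration above yields $\lfloor t\,\Ell(m)\rfloor\log\bigl(1 - \alpha\|\ve{y}\|^b/q^{mb}\bigr) \to -\sigma t\|\ve{y}\|^b$; exponentiating gives $E_m(t,\ve{y}) \to \e^{-\sigma t\|\ve{y}\|^b}$.

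For the integrated estimate, partition $K^d$ into three pieces: the inner region $A_m = \{\|\ve{y}\|^b \leq q^{mb}/\alpha\}$, the annulus $B_m = \{q^{mb}/\alpha < \|\ve{y}\|^b \leq q^{mb}\}$, and the exterior $C_m = \{\|\ve{y}\| > q^m\}$. On $A_m$ the base $1-\alpha\|\ve{y}\|^b/q^{mb}$ lies in $[0,1]$, so the elementary bound $(1-x)^n \leq \e^{-nx}$ combined with the calibration gives $E_m(t,\ve{y}) \leq \e^{-(\sigma t/2)\|\ve{y}\|^b}$ for all sufficiently large $m$. This supplies, together with $\e^{-\sigma t\|\ve{y}\|^b}$, an integrable $m$-independent dominating function on $A_m$, and dominated convergence sends the integral over $A_m$ to zero. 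On the exterior $C_m$, $E_m \equiv 0$, and $\int_{C_m} \e^{-\sigma t\|\ve{y}\|^b}\,\d\ve{y} \to 0$ by integrability of the density.

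The main obstacle is the annulus $B_m$, where $1 - \alpha\|\ve{y}\|^b/q^{mb}$ can be negative and no pointwise exponential dominating bound is available. There I would use the crude estimate $|1 - \alpha\|\ve{y}\|^b/q^{mb}| \leq \alpha - 1$, which by Proposition~\ref{Prim:Prop:Boundonalphaoverpb} is strictly less than $1$, so that $|E_m(t,\ve{y})| \leq (\alpha-1)^{\lfloor t\,\Ell(m)\rfloor}$ uniformly on $B_m$. Since $\mu_d(B_m) \leq q^{md}$ while $\Ell(m)$ grows like $q^{mb}$, the quantity $q^{md}(\alpha-1)^{\lfloor t\,\Ell(m)\rfloor}$ vanishes faster than any polynomial in $m$, and the contribution of $\e^{-\sigma t\|\ve{y}\|^b}$ on $B_m$ is negligible by integrability. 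Summing the three regional contributions establishes the lemma.
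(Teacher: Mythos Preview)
Your proof is correct and follows essentially the same strategy as the paper: decompose $K^d$ into the region where the base $1-\alpha\|\ve{y}\|^b/q^{mb}$ lies in $[0,1]$, the sphere $\SS_d(m)$ (your annulus $B_m$ is exactly this sphere, since $\alpha/q^b<1$ forces $q^{mb}/\alpha>q^{(m-1)b}$), and the exterior, then use $(1-x)^n\le e^{-nx}$ on the inner region and $|1-\alpha|<1$ on the sphere. The only notable difference is that on the inner region you invoke dominated convergence with the majorant $2e^{-(\sigma t/2)\|\ve{y}\|^b}$, whereas the paper uses Dini's theorem on compacta together with a growing radius $R_m$; your route is slightly more direct but otherwise equivalent.
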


\begin{proof}
For any $z$ in $[0, 1]$, the inequality \begin{equation*}0 \leq 1-z \leq \e^{-z}\end{equation*} implies that for any $m$ and any $x$ in $[0, m]$, \begin{equation}\label{ConofProcess:expineq:Dini}0 \leq \left(1-\frac{x}{m}\right)^m \leq \left(\e^{-\frac{x}{m}}\right)^m = \e^{-x}.\end{equation}  Since the sequence on the lefthand side of \eqref{ConofProcess:expineq:Dini} is increasing, for any positive real number $R$, Dini's theorem implies that $\left(1-\frac{x}{m}\right)^m$ converges uniformly to $\e^{-x}$ in $[0, R]$.  Proposition~\ref{Prim:Prop:Boundonalphaoverpb} guarantees that if $\|\ve{x}\|$ is less than $q^m$, then \begin{equation}\label{COP:Lem:expext}\alpha\|\ve{x}\|^b \leq q^{mb},\end{equation} and so \eqref{ConofProcess:expineq:Dini} together with the integrability of the exponential function $(-\infty, 0]$ implies that there is a divergent, increasing, positive sequence $(R_m)$ that is strictly bounded above by $q^m$ and \begin{equation}\label{COP:Lem:expextSeq}\lim_{m\to \infty} \int_{\|\ve{y}\| \leq R_m}\left|E_m(t,\ve{y}) - \e^{-\sigma \|\ve{y}\|^bt}\right|\,{\rm d}\ve{y} = 0.\end{equation}  Denote by $A_m$ the set \[A_m = \{\ve{y} \in K^d\colon R_m<\|y\|<q^m\; \text{and} \; \|y\| > q^m\}.\]  The ball $\BB_d(m)$ is the support of $E_m$, so the inequality \eqref{COP:Lem:expext} implies that for any $y$ in $A_m$, \[\left|E_m(t,\ve{y}) - \e^{-\sigma \|\ve{y}\|^bt}\right| \leq \e^{-\sigma \|\ve{y}\|^bt},\] and so \begin{equation}\label{COP:Lem:expextFar}\lim_{m\to \infty} \int_{A_m}\left|E_m(t,\ve{y}) - \e^{-\sigma \|\ve{y}\|^bt}\right|\,{\rm d}\ve{y} = 0.\end{equation}  Since $|1-\alpha|$ is in $[0,1)$, \begin{align}\label{COP:Lem:expextSingle}\lim_{m\to \infty} \int_{\SS_m}\left|E_m(t,\ve{y}) - \e^{-\sigma \|\ve{y}\|^bt}\right|\,{\rm d}\ve{y} &\leq \lim_{m\to \infty} q^{md}\left|(1 - \alpha)^{\floor{t\Ell(m)}} - \e^{-\sigma q^{md}t}\right| = 0.\end{align}  Decompose the integral in the statement of the lemma into three regions, the ball of radius less than $R_m$, the set $A_m$, and the circle of radius $q^m$ and use \eqref{COP:Lem:expextSeq}, \eqref{COP:Lem:expextFar}, and \eqref{COP:Lem:expextSingle} to obtain the desired limit. 
\end{proof}

The set $H_R$ of \emph{restricted histories} for paths in $D([0, \infty)\colon K^d)$ is the set of all histories whose route is a finite sequence of balls.  

\begin{proposition}\label{5:prop:restrictedhistconv}
For any restricted history $h$ in $H_R$, \[P^m(\C(h)) \to P(\C(h)).\]
\end{proposition}

\begin{proof}

For any ball $B$ of radius $q^{-m}$ in $K^d$, if $\ve{x}$ is in $B$, then $[\beta^{-m}\ve{x}]$ is the unique element of $G$ so that $\Gamma_m([\beta^{-m}\ve{x}])$ is in $B$, and so \eqref{EmbeddedProcessFDD} and \eqref{Convergence:Equation:fmintA} together imply that %
\begin{align*}
\Pbm(Y_t\in B) &= \PbG\big(S_{t_m}\in \Gamma_m^{-1}(B)\big)\\ %
& = \rho^\ast\big(t_m,\beta^{-m}\ve{x}\big)\\ %
& = q^{md}\int_{\ve{x} + \beta^m\O_K^d}\rho^\ast\big(t_m,[\ve{z}]\big)\,{\rm d}\ve{z}\\%
& = q^{md}\int_{K^d}\rho^\ast\big(t_m,[\beta^{-m}\ve{x}]\big)\mathds 1_{\ve{x} + \beta^m\O_K^d}(\ve{z})\,{\rm d}\ve{z}.
\end{align*}
Denote by $\rho_m$ the function that for each $t$ in $(0,\infty)$ is given by %
\begin{equation}
\rho_m(t, \ve{z}) = \sum_{[\beta^{-m}\ve{x}]\in G}q^{md}\rho^\ast\big(t_m,[\beta^{-m}\ve{x}]\big)\mathds 1_{\ve{x}+ \beta^m\O_K^d}(\ve{z}), %
\end{equation}
so that for any $t$ in $(0,\infty)$ and any set $U$ in $K^d$ that is a finite union of balls that each have radius at least $q^{-m}$, \[P^m(Y_t\in U) = \int_{K^d}\rho_m(t,\ve{x})\,{\rm d}\ve{x}.\]

The probability mass function $\rho^\ast$ for the primitive process is given by the equality %
\begin{align*}
\rho^\ast(n, [\ve{x}]) &= \int_{\O_K^d} \langle [\ve{x}], \ve{y}\rangle \big(1-\alpha \|\ve{y}\|^b\big)^n\,{\rm d}\ve{y},
\end{align*}
which implies that 
\begin{align*}
\rho_m(t, \Gamma_m([\ve{x}])) & = q^{md}\int_{\O_K}\chi(\beta^{-m}\ve{x}\cdot\ve{y})(1-\alpha\|\ve{y}\|^b)^{t_m}\,{\rm d}\ve{y}.%
\end{align*}
Use the change of variables \[\ve{z} = \beta^{-m}\ve{y}\]%
to obtain the equalities
\begin{align*}
\rho_m(t, \Gamma_m([\ve{x}])) & = q^{md}\int_{\beta^{-m}\O_K}q^{-md}\chi(\ve{x}\cdot\ve{z})\left(1-\alpha\frac{\|\ve{z}\|^b}{q^{mb}}\right)^{t_m}\,{\rm d}\ve{z}\\
& = \int_{\beta^{-m}\O_K}\chi(\ve{x}\cdot\ve{z})E_m(\ve{z})\,{\rm d}\ve{z} = \int_{K^d}\chi(\ve{x}\cdot\ve{z})E_m(\ve{z})\,{\rm d}\ve{z}.
\end{align*}
Take absolute values of the difference between $\rho_m$ and $\rho$ to see that 
 \begin{align}\label{5:prop:restrictedhistconv:unlim}
\left|\rho_m(t, \ve{x}) - \rho(t, \ve{x})\right| & \leq \int_{K^d} \left|\chi(\ve{x}\cdot\ve{y}) E_m(t,\ve{y}) - \chi(\ve{x}\cdot\ve{y}){\rm e}^{-\sigma t\|\ve{y}\|^b}\right|\,{\rm d}\ve{y}\notag\\& = \int_{K^d} \left|E_m(t,\ve{y}) - {\rm e}^{-\sigma t\|\ve{y}\|^b}\right|\,{\rm d}\ve{y} = \varepsilon_m(t) \to 0.
\end{align}
The sequence $(\varepsilon_m(t))$ is independent of $\ve{x}$, which implies that %
$(\rho_m(t, \cdot))$ converges uniformly on $K^d$ to $\rho(t, \cdot)$.

Take $h$ to be any restricted history and without loss in generality suppose that $U_h(0)$ is the set $\{0\}$.  Simplify the notation by writing \[e_h = (t_0, \dots, t_k) \quad {\rm and}\quad U_h = (\{0\}, U_1, \dots, U_k).\]  For any $i$ in $\{0, \dots, k\}$, denote by $r_i$ the radius of $U_i$. For any $m$ so that \[q^{-m} < \min\{r_1, \dots, r_k\},\] the uniform convergence given by \eqref{5:prop:restrictedhistconv:unlim} implies that %
\begin{align*}
\Pbm(\C(h)) &= \Pbm(Y_{t_1}\in U_1, Y_{t_2}\in U_2, \dots, Y_{t_n}\in U_n)\\
&= \int_{U_1} \cdots \int_{U_k} \prod_{i\in\{1, \dots, k\}}\rho_m(t_i-t_{i-1}, \ve{x}_i -\ve{x}_{i-1})\,{\rm d}\ve{x}_k\cdots {\rm d}\ve{x}_1\\
&\to  \int_{U_1}\cdots \int_{U_k} \prod_{i\in\{1, \dots, k\}}\rho(t_i-t_{i-1}, \ve{x}_i-\ve{x}_{i-1})\,{\rm d}\ve{x}_k\cdots {\rm d}\ve{x}_1 = \Pb(\C(h)).
\end{align*}
\end{proof}

The intersection of any two balls in $K^d$ is again a ball in $K^d$, and so $\C(H_{R})$ is a $\pi$-system that generates the $\sigma$-algebra of cylinder sets of paths in $D([0, \infty)\colon K^d)$.  The uniform tightness of the family of measures $\{\Pbm\colon m\in \mathds N_0\}$ that Proposition~\ref{tight} guarantees together with the convergence for any restricted history $h$ of $(\Pbm(\C(h)))$ to $\Pb(\C(h))$ implies Theorem~\ref{Sec:Con:Theorem:MAIN}.

\begin{theorem}\label{Sec:Con:Theorem:MAIN}
The sequence of measures $(\Pbm)$ converges weakly to $\Pb$.
\end{theorem}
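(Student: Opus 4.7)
The plan is to assemble Theorem~\ref{Sec:Con:Theorem:MAIN} directly from the two preceding propositions via the classical criterion for weak convergence in the Skorohod space: a sequence $(\Pbm)$ converges weakly to $\Pb$ provided that $(\Pbm)$ is uniformly tight and the finite dimensional distributions of $\Pbm$ converge to those of $\Pb$ on a $\pi$-system of cylinder sets that generates the full cylinder $\sigma$-algebra. The moment estimate of Proposition~\ref{MainEstimateEmbedded} and the tightness statement of Proposition~\ref{tight} supply the first ingredient, while Proposition~\ref{5:prop:restrictedhistconv} supplies the second on the $\pi$-system $\C(H_R)$.

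First, I would confirm that $\C(H_R)$, the family of simple cylinder sets cut out by restricted histories, is both a $\pi$-system and generates the cylinder $\sigma$-algebra on $D([0,\infty)\colon K^d)$. The ultrametric structure of $K^d$ makes the first assertion transparent: any two balls in $K^d$ are either disjoint or nested, so the collection of balls is closed under finite intersection. Consequently, a finite intersection of simple cylinder sets indexed by restricted histories is again a simple cylinder set indexed by a restricted history (after merging any coinciding time points and intersecting the corresponding route entries). Since balls form a base for the topology of $K^d$ and therefore generate $\mathcal B(K^d)$, the $\sigma$-algebra generated by $\C(H_R)$ is precisely the cylinder $\sigma$-algebra on $D([0,\infty)\colon K^d)$.

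Next, Proposition~\ref{5:prop:restrictedhistconv} yields $\Pbm(\C(h)) \to \Pb(\C(h))$ for every $h$ in $H_R$, and an application of Dynkin's $\pi$-$\lambda$ theorem promotes this to convergence of the finite dimensional distributions of $\Pbm$ to those of $\Pb$ on the full cylinder $\sigma$-algebra. Combined with the uniform tightness of $(\Pbm)$ from Proposition~\ref{tight}, the standard weak convergence criterion for measures on the Skorohod space \cite{bil1, cent} delivers the weak convergence of $(\Pbm)$ to $\Pb$.

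The real work has already been carried out upstream: the main obstacle was establishing the uniform moment estimate in Proposition~\ref{MainEstimateEmbedded} (which reduced to the primitive moment bound in Theorem~\ref{LOM:Theorem:PrimitiveMoments} via the Wendel-inequality estimate of the beta-function integral) and the uniform convergence of transition densities in Lemma~\ref{last:Emlimit}. Given those, the argument above is essentially bookkeeping: the remaining step beyond invoking the two propositions is simply to justify that restricted histories suffice to control all cylinder probabilities, which the ultrametric nature of $K^d$ renders immediate.
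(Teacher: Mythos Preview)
Your proposal is correct and follows essentially the same route as the paper: verify that $\C(H_R)$ is a $\pi$-system generating the cylinder $\sigma$-algebra (using that balls in the ultrametric space $K^d$ are closed under intersection), then combine the uniform tightness of Proposition~\ref{tight} with the convergence on restricted histories of Proposition~\ref{5:prop:restrictedhistconv} and invoke the standard weak-convergence criterion \cite{bil1, cent}. One small caution: Dynkin's $\pi$--$\lambda$ theorem does not by itself promote \emph{convergence} on a $\pi$-system to convergence on the generated $\sigma$-algebra (the class of sets where $\Pbm(A)\to\Pb(A)$ is not a $\lambda$-system); the correct logic, implicit in the cited references, is that tightness gives relative compactness, every subsequential weak limit agrees with $\Pb$ on $\C(H_R)$, and then Dynkin identifies that limit as $\Pb$.
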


As in the real case and the $p$-adic case \cite{BW, WJPA}, the scaling factor for the time scales is proportionate to a power of the scaling factor for the space scales.  The power is the exponent of the Vladimirov-Taibleson operator, and is independent of both $q$ and the dimension.


\begin{thebibliography}{}

%
\bibitem{alb}  Albeverio, S., Karwowski, W.: \textsl{A random walk on $p$-adics - the generator and its spectrum}. Stochastic Processes and their Applications \textbf{53} l-22, (1994).


%
\bibitem{BW} Bakken, E., Weisbart, D.:  \textsl{$p$\,-Adic brownian motion as a limit of discrete time random walks}. Commun. Math. Phys. 369, 371-402, (2019).

%
\bibitem{Beltra1} Beltrametti, E.G.: \textsl{Can a finite geometry describe the physical space-time?} Universita degli studi di Perugia, Atti del convegno di geometria combinatoria e sue applicazioni, Perugia, 57-62 (1971).

%
\bibitem{Beltra2} Beltrametti, E.G.: \textsl{Note on the $p$-adic generalization of Lorentz transformations}. Discrete mathematics, 1, 239-246, (1971).

%
\bibitem{Beltra3} Beltrametti, E.G., Cassinelli, G.: \textsl{Quantum mechanics and $p$-adic numbers}. 2, 1-7, (1972).

%
\bibitem{Bott75} Bott, R.: \textsl{On the shape of a curve}. Advances in Mathematics, Volume 16, Issue 2, 144-159, (1975).

%
\bibitem{bil1} Billingsley, P.: \textsl{Convergence of Probability Measures, Second Edition}. John Wiley $\&$ Sons, (1999).

%
\bibitem{cent} Chentsov, N.N.: \textsl{Weak convergence of stochastic processes whose trajectories have no discontinuities of the second kind and the ``heuristic'' approach to the Kolmogorov--Smirnov tests}. Theory of Probability \& Its Applications, 1(1):140-144, (1956).

%
\bibitem{DKKV:2009} Dragovich, B., Khrennikov, A.Yu., Kozyrev, S.V., Volovich, I.V.: \textsl{On $p$-adic mathematical physics}. P-Adic Num. Ultrametr. Anal. Appl. Volume 1, Issue 1, 1-17, (2009).

%
\bibitem{DKKVZ:2017} Dragovich, B., Khrennikov, A.Yu., Kozyrev, S.V., Volovich, I.V., Zelenov, E.I.: \textsl{$p$-Adic mathematical physics: the first 30 years}. P-Adic Num. Ultrametr. Anal. Appl. 9, 87-121, (2017).

%
\bibitem{koch92} Kochubei, A.N.: \textsl{Parabolic equations over the field of $p$-adic numbers}. Math. USSR Izvestiya 39, 1263-1280, (1992).

%
\bibitem{RW:JFAA:2023} Rajkumar, R., Weisbart, D.: \textsl{Components and exit times of Brownian motion in two or more $p$-adic dimensions}. J. Fourier Anal. Appl. 29 (6), 75 (2023).

%
\bibitem{Riemann:1854} Riemann, B.: \textsl{On the Hypotheses Which Lie at the Bases of Geometry}. translated by WK Clifford. Nature, 8, 114-117, (1854).

%
\bibitem{SC1} Saloff-Coste, L.: \textsl{Op\'{e}rateurs pseudo-diff\'{e}rentiels sur un corps local}. C. R. Acad. Sci. Paris S\'{e}r. I 297, 171-174, (1983).

%
\bibitem{SC2} Saloff-Coste, L.: \textsl{Op\'{e}rateurs pseudo-diff\'{e}rentiels sur certains groupes totalement discontinus}. Studia Math. 83, 205-228, (1986).

%
\bibitem{Taib} Taibleson, M.H.: \textsl{Fourier Analysis on Local Fields}. Princeton University Press, Princeton, N.J.; University of Tokyo Press, Tokyo, (1975).

%
\bibitem{var97} Varadarajan, V.S.: \textsl{Path integrals for a class of $p$-adic Schr{\"o}dinger equations}. Lett. Math. Phys. 39, no. 2, 97-106, (1997).

%
\bibitem{V2011} Varadarajan, V.S.: \textsl{Reflections on Quanta, Symmetries, and Supersymmetries}. Springer, (2011).

%
\bibitem{Vlad88} Vladimirov, V.S.: \textsl{Generalized functions over the field of $p$-adic numbers}. Russian Math. Surveys, 43:5, 19-64, (1988).

%
\bibitem{Vlad90} Vladimirov, V.S.: \textsl{On the spectrum of some pseudo-differential operators over $p$-adic number field}. Algebra and analysis 2, 107-124, (1990).


%
\bibitem{vol1} Volovich, I.V.: \textsl{Number theory as the ultimate physical theory}. Preprint CERN-TH. 87, 4781-4786, (1987).

%
\bibitem{Weil} Weil, A.: \textsl{Basic Number Theory}. Springer, Berlin; Heidelberg; New York, (1967).

%
\bibitem{WJPA} Weisbart, D.: \emph{$p$-Adic Brownian motion is a scaling limit. Journal of Physics A: Mathematical and Theoretical}. (2024). 10.1088/1751-8121/ad40df. 

%
\bibitem{Wend} Wendel, J.G.: \textsl{Note on the gamma function}. Amer. Math. Monthly, 55:563-564, (1948).

%
\end{thebibliography}
\end{document}